\documentclass[12 pt,oneside]{amsart}

\usepackage{mathabx}

\usepackage{amsmath}
\usepackage{amsxtra}
\usepackage{amscd}
\usepackage{amsthm}
\usepackage{amsfonts}
\usepackage{amssymb}
\usepackage{eucal}
\usepackage{graphicx}
\usepackage{lipsum}
\graphicspath{ {images/} }

\usepackage[margin=1.1 in]{geometry}
\usepackage{marginnote}

\theoremstyle{definition}
\newtheorem{thm}{Theorem}[section]

\newtheorem{lemma}[thm]{Lemma}

\newtheorem{claim}[thm]{Claim}
\newtheorem{condition}[thm]{Condition}

\newtheorem*{example}{Example}

\DeclareFontFamily{U}{mathx}{}
\DeclareFontShape{U}{mathx}{m}{n}{<-> mathx10}{}
\DeclareSymbolFont{mathx}{U}{mathx}{m}{n}
\DeclareMathAccent{\widehat}{0}{mathx}{"70}
\DeclareMathAccent{\widecheck}{0}{mathx}{"71}

\newcommand\nc{\newcommand}

\nc\Span{\text{\rm Span}}
\nc\Id{\text{Id}}

\nc \cc {\mathbb{C}}
\nc \dd {\mathbb{D}}
\nc \ff {\mathbb{F}}
\nc \hh {\mathbb{H}}
\nc \ii {\mathbb{I}}
\nc \nn {\mathbb{N}}
\nc \oo {\mathbb{O}}
\nc \pp {\mathbb{P}}
\nc \qq {\mathbb{Q}}
\nc \rr {\mathbb{R}}

\nc \zz {\mathbb{Z}}

\nc\cA{\mathcal{A}}
\nc\cB{\mathcal{B}}
\nc\cC{\mathcal{C}}
\nc\cD{\mathcal{D}}
\nc\cE{\mathcal{E}}
\nc\cF{\mathcal{F}}
\nc\cG{\mathcal{G}}
\nc\cH{\mathcal{H}}
\nc\cK{\mathcal{K}}
\nc\cL{\mathcal{L}}
\nc\cM{\mathcal{M}}
\nc\cN{\mathcal{N}}
\nc\cO{\mathcal{O}}
\nc\cP{\mathcal{P}}
\nc\cQ{\mathcal{Q}}
\nc\cR{\mathcal{R}}
\nc\cS{\mathcal{S}}
\nc\csf{\mathcal{S}\mathcal{F}}

\nc\fs{\mathfrak{s}}
\nc\fg{\mathfrak{g}}
\nc\fm{\mathfrak{m}}
\nc\fp{\mathfrak{p}}
\nc\fso{\mathfrak{so}}
\nc\fsu{\mathfrak{su}}

\nc\Tr{\text{Tr}}
\nc\into{\hookrightarrow}

\nc\st{\text{ s.t. }}
\nc\intense[1]{\textcolor[rgb]{1.00,0.00,0.00}{\textbf{#1}}}

\renewcommand{\(}{\left(}
\renewcommand{\)}{\right)}

\nc\Mat{\text{\rm Mat}}
\nc\GL{\text{\rm GL}}
\nc\SU{\text{\rm SU}}
\nc\SO{\text{\rm SO}}
\nc\SL{\text{\rm SL}}
\nc\Sp{\text{\rm Sp}}
\nc\EL{\text{\rm EL}}

\nc\GEM{\text{\rm GEM}}
\nc\Alt{\text{\rm Alt}}
\nc\Sym{\text{\rm Sym}}
\nc\Hess{\text{Hess}}

\nc\Crit{\text{Crit}}

\renewcommand{\(}{\left(}
\renewcommand{\)}{\right)}

\nc\inject{\hookrightarrow}

\nc{\mattwo}[4]{\left[\begin{array}{cc} #1  & #2\\  #3 & #4 \\ \end{array} \right]}
\nc{\matthree}[9]{\left[\begin{array}{ccc} #1  & #2 & #3\\  #4 & #5 & #6 \\ #7 & #8 & #9 \\ \end{array} \right]}
\nc{\vecttwo}[2]{\left[\begin{array}{c} #1 \\ #2 \\ \end{array} \right]}
\nc{\vectthree}[3]{\left[\begin{array}{c} #1 \\ #2\\  #3 \\ \end{array} \right]}

\nc{\del}{\partial}
\nc\onto{\twoheadrightarrow}

\nc\const{\text{const}}

\nc\rrp{\rr P}
\nc\ul{\underline}
\nc\ol{\overline}
\nc\uline{\underline}
\nc\oline{\overline}
\nc\oset{\overset}
\nc\uset{\underset}

\nc\heart{\heartsuit}
\nc\spade{\spadesuit}
\nc\club{\clubsuite}

\nc\marg[1]{\marginnote{\boxed{\text{#1}}}}
\nc\margq[1]{\marginnote{\textcolor[rgb]{1.00,0.00,0.00}{#1}}}

\nc\Char{\text{Char}}
\nc\Frac{\text{Frac}}
\nc\wo{\backslash}
\nc\diag{\text{diag}}
\nc\wtl{\widetilde}
\nc\nsubgp{\triangleleft}

\nc\Cay{\text{Cay}}
\nc\Hom{\text{Hom}}
\nc\Gp{\text{Gp}}
\nc\Set{\text{Set}}
\nc\la{\langle}
\nc\ra{\rangle}

\nc\Spec{\text{Spec}}
\nc\ad{\text{ad}}

\nc\wht{\widehat}
\nc\ddx[2]{\frac{\partial {#1}}{\partial {#2}}}
\nc\dddx[3]{\frac{\partial^2 {#1}}{\partial {#2}\partial{#3}}}
\nc\mult{\text{mult}}
\nc\supp{\text{supp}}
\nc\sign{\text{sign}}

\nc\tr{\text{tr}}
\nc\stab{\text{stab}}
\nc\im{\text{im}}

\nc\sech{\text{sech}}

\nc\Ind{\text{Ind}}
\nc\tsf{\text{sf}}
\nc\End{\text{End}}
\nc\Hol{\text{Hol}}
\nc\hol{\text{hol}}
\nc\Lie{\text{Lie}}

\nc\vol{\text{vol}}
\nc\ind{\text{ind}}
\nc\Met{\mathcal{M}\text{et}}
\nc\Grass{\mathcal{G}\text{rass}}

\nc\longto{\longrightarrow}
\nc\grad{\text{grad}}
\nc\Map{\text{Map}}
\nc\Indx{\text{Ind}}
\nc\indx{\text{ind}}
\nc\pt{\text{pt}}
\nc\Aut{\text{Aut}}
\nc\Br{\text{Br}}
\nc\Gr{\text{Gr}}
\nc\CS{\text{CS}}

\nc\Proj{\text{Proj}}
\nc\Ad{\text{Ad}}
\nc\Diff{\text{Diff}}
\nc\sing{\text{sing}}
\nc\order{\text{order}}
\nc\bsl{\backslash}

\nc\BP{\textbf{P}}
\nc\red{\text{red}}
\nc\Kh{\text{Kh}}
\nc\Tor{\text{Tor}}
\nc\spinc{\text{spin}^c}

\newcommand\blfootnote[1]{%
  \begingroup
  \renewcommand\thefootnote{}\footnote{#1}%
  \addtocounter{footnote}{-1}%
  \endgroup
}

\title[Families of p.s.c. metrics on spectral sequence cobordisms]{Families of metrics with positive scalar curvature on spectral sequence cobordisms}
\author{Sherry Gong}

\begin{document}

\begin{abstract}

We study families of metrics on the cobordisms that underlie the differential maps in Bloom's monopole Floer spectral sequence, a spectral sequence for links in $S^3$ whose $E^2$ page is the Khovanov homology of the link, and which abuts to the monopole Floer homology of the double branched cover of the link.

The higher differentials in the spectral sequence count parametrized moduli spaces of solutions to Seiberg-Witten equations, parametrized over a family of metrics with asymptotic behaviour corresponding to a configuration of unlinks with 1-handle attachments. For a class of configurations, we construct families of metrics with the prescribed behaviour, such that each metric therein has positive scalar curvature. The positive scalar curvature implies that there are no irreducible solutions to the Seiberg-Witten equations and thus, when the spectral sequences are computed with these families of metrics, only reducible solutions must be counted.

The class of configurations for which we construct these families of metrics includes all configurations that go into the spectral sequence for $T(2,n)$ torus knots, and all configurations that involve exactly two 1-handle attachments.

\end{abstract}

\maketitle

\section{Introduction}

Monopole Floer homology is a gauge-theoretic invariant for 3-manifolds defined via a process analogous to Morse homology, using the Chern-Simons-Dirac functional. Its underlying chain complex is generated by Seiberg-Witten monopoles on the 3-manifold, and differentials count monopoles over the product of the 3-manifold with $\rr$.\blfootnote{Author supported by US National Science Foundation grant DMS-2055736.}

In \cite{Bloom}, Bloom constructed a spectral sequence associated to a projection of a knot or link, from the reduced Khovanov homology of the link to a version of monopole Floer homology of the branched double cover of the link. The differentials in the spectral sequence arise from counting solutions to the Seiberg-Witten equations over a family of metrics on the cobordism.

There are two kinds of solutions to the Seiberg-Witten equations that come into the
differentials: the irreducible ones and the reducible ones. If the family of metrics on the cobordism
can be chosen such that all of the metrics have positive scalar curvature, this would mean that
there are no irreducible solutions, and the monopole Floer homology would be computable by only counting reducible solutions.

This effect has been studied in the reverse direction of using invariants related to the monopole Floer homology to obstruct metrics of positive scalar curvature: in \cite{SeibWit_psc}, Seiberg and Witten used an invariant coming from counting solutions to Seiberg-Witten equations to obstruct the existence of a metric of positive scalar curvature on closed $\spinc$ $4$-manifolds $X$ with $b_2^+(X)>0$.  In \cite{Ruberman2001PositiveSC}, Ruberman used Seiberg-Witten invariants to give examples of simply connected $4$-manifolds for which the space of metrics of positive scalar curvature is disconnected. In \cite{Lin_psc_end_periodic}, Lin gave a new obstruction to the existence of positive scalar curvature metrics on compact $4$ manifolds with the same homology as $S^1 \times S^3$ by studying Seiberg-Witten equations.

The purpose of this paper is to construct families of metrics of positive scalar curvature for cobordisms arising in Bloom's spectral sequence for certain classes of link diagrams, and in doing so give a better understanding of the monopole Floer homology and Bloom's spectral sequence relating it to Khovanov homology.

To specify the particular classes of link diagrams, consider a link projection $P_L \subset S^2$ with the black-and-white checkerboard colouring, as in Figure \ref{checkerboard}. 

\begin{figure}[ht!]
\centering
\includegraphics[width=60mm]{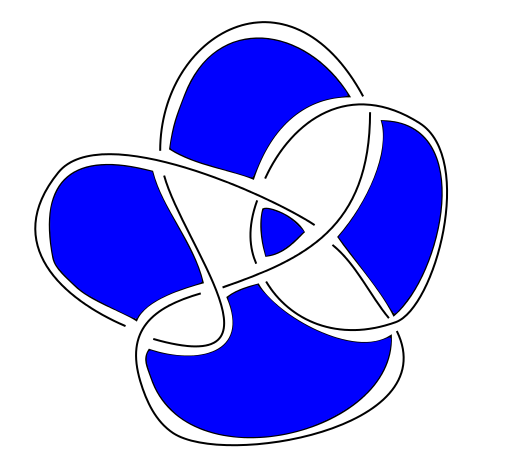}
\caption{The checkerboard colouring for the link $L8N1$. (The blue parts are the ``black regions''.)}
\label{checkerboard}
\end{figure}

Let $G$ be the graph associated with $P_L$, with vertices corresponding to black regions of $P_L$, and edges corresponding to crossings; for the link projection depicted in Figure \ref{checkerboard}, $G$ has five vertices and eight edges.

Let $L_0$ be the unlink obtained by resolving all the crossings in $P_L$ in a way such that each black region becomes a component, and let $L_1$ be the resolutions that reverses all the crossings of $L_0$. In particular, if we were considering an alternating link projection, these would be the $0$ and $1$ resolutions.

For the link projection depicted in Figure \ref{checkerboard}, the unlink $L_0$ can be seen as the five circles that form the boundary of the five blue regions. The information of the link projection can now be expressed as an unlink $L_0$ along with some crossings, as shown in Figure \ref{loops_with_crossings}. In order fully to capture the information of the link, each crossing is coloured in one of two colours representing whether the resolution in $L_0$ is the $0$ or $1$ resolution.

\begin{figure}[ht!]
\centering
\includegraphics[width=60mm]{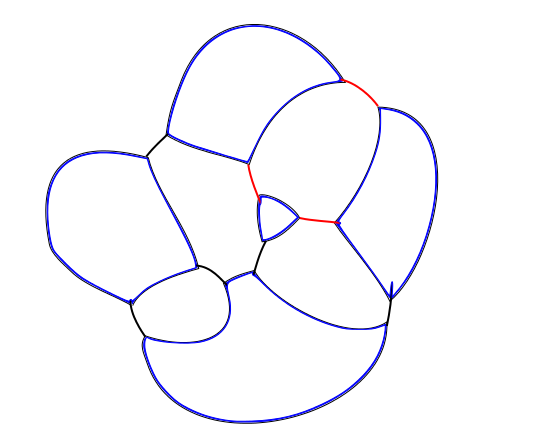}
\caption[]{$L_0$ and the crossings for the $L8N1$ projection above in Figure \ref{checkerboard}. The red and black crossings are for when the resolution into $L_0$ involved the $0$ resolution and the $1$ resolution, respectively.)}
\label{loops_with_crossings}
\end{figure}

The main theorem of this paper is the following.

\begin{thm} Suppose that the black graph $G$ has a vertex such that all of the edges have one end (but not both ends) on that vertex. Then there is a family of metrics on $W(IJ)$ for any pair of resolutions on $I$ and $J$ of $P_L$, parametrising the translations of the handles, as described below, such that every metric in the family has positive scalar curvature.
\label{psc_family_thm} 
\end{thm}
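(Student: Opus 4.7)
The plan is to decompose $W(IJ)$ into geometric building blocks, equip each with an explicit PSC metric, and glue them compatibly over the full parameter space of handle translations. The graph hypothesis does the critical combinatorial work: if $v$ is a vertex at which every edge has exactly one endpoint, then every crossing of $P_L$ pairs the circle $C_v \subset L_0$ corresponding to $v$ with a circle distinct from $C_v$. Consequently, every $1$-handle of $W(IJ)$ has exactly one foot on a lift of $C_v$ and its other foot on a lift of some other circle, and the only potential overlap between distinct handles is along $C_v$.

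First, I would fix base PSC metrics on the cylindrical sub-cobordisms. At each resolution, the underlying $3$-manifold is a branched double cover of an unlink, which is a connected sum of copies of $S^1 \times S^2$ and admits a standard PSC metric by Gromov--Lawson surgery on the round $S^3$. Taking products with intervals yields PSC metrics on the cylindrical sub-pieces of $W(IJ)$.

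Next, I would attach the $1$-handles via the $4$-dimensional Gromov--Lawson handle construction, using a torpedo-shaped bending profile to retain PSC across each attachment. Because the hypothesis forces all inter-handle interactions to take place along $C_v$, the feet on $C_v$ can be arranged in small pairwise disjoint disks (even when several edges share the vertex $v$), and the attachments can be performed one at a time with no loss of PSC. The feet on the non-$C_v$ side sit on distinct regions whenever the corresponding edges have distinct other endpoints, and when they do share another endpoint they can be separated along that circle without any new complication. Sliding each foot along the cylindrical $C_v$-direction defines the family: the parameter space is (a compactification of) a product of intervals, one per handle, and the construction carries through uniformly over this space.

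The hardest step is the last one: verifying that the PSC construction can be performed uniformly over the entire parameter space, including its boundary strata. One must choose the bending radii and profile shapes to vary continuously with the translation parameters, shrinking them as necessary when two feet on $C_v$ approach each other, and degenerating correctly in the limits where some parameters diverge (so that the broken metric on the limiting pair of sub-cobordisms is again PSC). The graph hypothesis is precisely what makes this feasible, since it confines all potential handle--handle conflict to the single circle $C_v$, along which feet can always be separated by translation; the main technical work is then to make this separation coherent across the full parameter space and its boundary.
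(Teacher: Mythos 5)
There is a genuine gap in the proposal, centered on the invocation of the Gromov--Lawson handle construction. The $4$-dimensional $2$-handles in $W(IJ)$ are attached along $S^1\times D^2$ tubular neighbourhoods of circles in the $3$-manifold $\#^n(S^1\times S^2)$; the corresponding surgery on the boundary $3$-manifold is surgery on a circle, which has codimension $2$. The Gromov--Lawson/Schoen--Yau surgery theorem, and its ``torpedo''-type refinements for cobordisms, require codimension $\geq 3$ and do not apply. This is precisely why the paper cannot cite general machinery and instead builds the metric on the elementary handle by hand: it realizes the handle as the vanishing locus of an explicit map $h:\rr^6\to\rr^2$ depending on a profile function $r(\theta,t)$, computes the scalar curvature symbolically, and shows (after a careful choice of $r$ as $r_0 - P(t)Q(\theta)$ with smoothed step functions $P,Q$, and after sending the scale $R\to\infty$) that the curvature stays positive. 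The heart of the theorem is this explicit construction, and its crucial output is that the attaching disk $D^2\subset S^2$ can be taken arbitrarily small relative to the $S^2$ radius; that smallness is what lets several handles in the same $S^1\times S^2$ summand be attached in disjoint tubular neighbourhoods and is what makes the Gromov--Lawson connected-sum necks (which \emph{are} codimension $3$, hence legitimate) compatible with the handle metrics.

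There is also a conceptual confusion in the proposal that propagates through the rest of the argument. You describe the handles as $1$-handles with ``two feet,'' one on a lift of $C_v$ and one elsewhere, and you define the family by ``sliding each foot along the cylindrical $C_v$-direction.'' That is the picture downstairs in $S^3$ (bands attached to the unlink). In the double branched cover, each band lifts to a single $4$-dimensional $2$-handle attached along a \emph{connected} $S^1\times D^2$, namely a tubular neighbourhood of a fibre circle $S^1\times\{x\}$ in one $S^1\times S^2$ summand. There are no ``feet,'' and nothing slides along $C_v$. Moreover, the family parameter in Bloom's construction is the \emph{time} $t_i$ at which the $i$-th handle is attached (translation in the $\rr$ factor of the cobordism), not a spatial position, so the worry about feet colliding and degenerating metrics on boundary strata is not where the difficulty lies. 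Once a PSC metric on the elementary handle exists and is constant in time outside a small window, the family over $\{(t_1,\dots,t_k):\sum t_i=0\}$ is obtained essentially for free, because the supports of the different handle attachments are pairwise disjoint in space. The actual work, which the proposal does not carry out or correctly reduce to a cited theorem, is showing that a PSC metric on the codimension-$2$ handle exists at all.
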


Note that in our definition, we are allowed to reverse black and white, and the conditions of the lemma are not symmetric in this reversal. For example, for a trefoil, for one choice of colouring of black and white, the black graph consists of two vertices and three edges all of which go between the vertices, which satisfies the condition of the lemma. For the other choice, the graph has three vertices, $x,y,z$ and three edges, one between each pair of $x$, $y$, and $z$, and does not satisfy the conditions.

A consequence of this theorem is that for these links, the differentials corresponding to cube-faces in Bloom's spectral sequence can be computed by counting only the reducible solutions to the Seiberg-Witten equations, since the positivity of the scalar curvature of the family of metrics that solutions are being counted on precludes the existence of irreducible solutions.

In \cite{szabo_geometric} Szab\'{o} defined a combinatorial spectral sequence meant to model a spectral sequence for Heegaard Floer homology. One may hope to use a count of reducible solutions to construct an analogous combinatorial spectral sequence for monopole Floer homology.

\begin{example}
Some examples of link diagrams for which the theorem may be applied are minimal projections of the torus links $T_{2,n}$ and connected sums of such. For these, if we take the standard projection of $T_{2,n}$s, connect-sum them along their wings, and then take the black-and-white checkerboard colouring with the outside region coloured black, as in Figure \ref{t2n_conn_sums}, it is easy to see that all edges will emanate from the vertex corresponding to the outside region.

\begin{figure}[ht!]
\centering
\includegraphics[width=80mm]{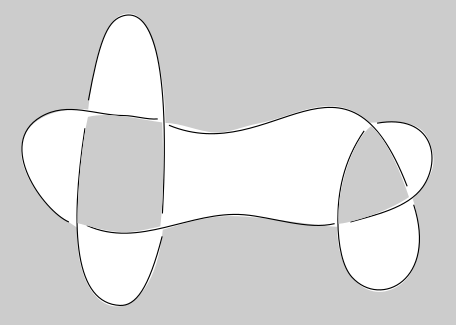}
\caption{A valid black-and-white checkerboard colouring for a connected sum of $T_{2,4}$ and $T_{2,3}$.}
\label{t2n_conn_sums}
\end{figure}
\end{example}

The four-dimensional cobordisms in \cite{Bloom} are topologically described as cobordisms between connected sums of $S^1 \times S^2$s given by the attachment of $2$-handles ($D^2 \times D^2$s) that are attached along $S^1 \times D^2$s that are away from each other. These connected sums of $S^1 \times S^2$s arise as double-branched covers of $S^3$ along unlinks, where the unlinks come from resolving the crossings of the original link projection. The $2$-handle attachments to the connected sums of $S^1 \times S^2$s arise as the double branched cover of $1$-handles attached to the unlinks, where the $1$-handles are the same 1-handles that underlie the merge and split maps between the $0$- and $1$-resolutions of the crossings.

The families of metrics of positive scalar curvature we construct are such that away from the attaching $S^1 \times D^2$s, where the cobordism looks like a $(\#^k(S^1 \times S^2)) \times \rr$, the metrics are constant in the $\rr$ axis and given by taking standard metrics on the $S^1 \times S^2$s away from the points at which they are connected-summed to each other.

In the notation of configurations of unlinks with $1$-handle attachments, this means that reversing the sign of a crossing does not affect whether we can construct our families of metrics, since we can easily time-reverse our construction withing the active region of the handle-attachments. This is why the signs of the crossings do not come into the statement of the theorem.

In fact, the theorem may be stated more generally as follows:

\begin{thm}\label{thm_in_terms_of_L0}
Suppose there is a resolution $L_0$ of the link projection $P_L$ (with each crossing resolved with either the $0$ or $1$ resolutions, but they do not all have to be resolved the same way as each other), with unlink components $c_0, c_1, \ldots c_n$ such that all the handle attachments involved in going from the resolution $L_0$ to its opposite resolution $L_1$ are between $c_0$ and $c_i$ for $i \geq 1$. 

Then there is a family of metrics on $W(IJ)$ for any pair of resolutions on $I$ and $J$ of $P_L$, parametrising the translations of the handles, such that every metric in the family has positive scalar curvature.
\end{thm}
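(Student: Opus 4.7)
The plan is to reduce Theorem~\ref{thm_in_terms_of_L0} to Theorem~\ref{psc_family_thm} using the time-reversal observation made in the paragraph preceding the statement: the metrics constructed for Theorem~\ref{psc_family_thm} are, outside the active regions of the handle attachments, product metrics on $(\#^k(S^1 \times S^2)) \times \rr$, and hence invariant under reflection of the $\rr$-coordinate. Consequently, within each individual handle's active region we are free to reverse the direction of the $\rr$-axis without destroying positivity of scalar curvature or smoothness of the metric.

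First I would rephrase the hypothesis graph-theoretically. Given the chosen resolution $L_0$ with components $c_0, c_1, \ldots, c_n$, form an auxiliary graph $G'$ with these components as vertices and one edge for each $1$-handle attachment taking $L_0$ to its opposite $L_1$. The assumption that every such attachment is between $c_0$ and some $c_i$ with $i \geq 1$ is exactly the condition of Theorem~\ref{psc_family_thm} applied to $G'$ with $c_0$ in the role of the distinguished vertex. When $L_0$ is the checkerboard-coloured resolution, $G'$ coincides with the black graph $G$ and there is nothing new to prove.

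In the general case, the $1$-handle attachments from $L_0$ to $L_1$ may be a mixture of merges and splits, depending on whether each crossing was resolved to $0$ or $1$ in $L_0$, whereas the construction of Theorem~\ref{psc_family_thm} uses a uniform convention. I would apply that construction handle by handle, using $c_0$ as the distinguished vertex, but within each active region whose orientation differs from the standard convention I would compose with the $\rr$-reflection. Since scalar curvature is invariant under orientation-reversing isometry, positive scalar curvature is preserved; since the surrounding product metric is reflection-symmetric, the reflected pieces glue smoothly to each other and to the rest. The disjointness of distinct active regions allows these local choices to be made independently, and to be compatible with arbitrary translation parameters for each handle, yielding the required family on $W(L_0 L_1)$. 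The statement for an arbitrary $W(IJ)$ then follows because the handles involved in $W(IJ)$ form a sub-collection of those between $L_0$ and $L_1$, so they still constitute a star centered at (the image of) $c_0$.

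The principal thing to verify carefully is the gluing in the previous paragraph: one must check that the local $\rr$-reflections, combined with arbitrary independent translations of all handles, produce a genuinely smooth parametrised family rather than merely a pointwise collection of metrics. This should reduce to a standard partition-of-unity argument supported on the reflection-symmetric product collars separating the active regions from one another, which is where I expect essentially all of the technical content of the formal proof to lie.
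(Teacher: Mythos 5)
Your plan matches the paper's own treatment: Theorem~\ref{thm_in_terms_of_L0} is presented there as following from the same handle-by-handle construction that proves Theorem~\ref{psc_family_thm}, with exactly the time-reversal observation you invoke to accommodate crossings resolved opposite to the standard convention. One small simplification: the gluing in your last paragraph needs no partition of unity, since the handle metrics of Section~\ref{basic_handle} are built to agree \emph{exactly} (not merely approximately) with the fixed product metric on $\#^{n}(S^1 \times S^2)\times \rr$ outside disjoint tubular neighbourhoods of the attaching circles, so the pieces fit by equality and the translation parameters act independently without further work.
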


\begin{example} For any configuration with exactly two handle attachments, that is for any configuration corresponding to a link projection with two crossings, the theorem applies. These are the configurations Szab\'{o} calls \textit{2-dimensional configurations} in \cite{szabo_geometric}.

To see this, observe that all such configurations arise as resolutions of link projections with two crossings, of which there are only four types, depicted in Figure \ref{dimension2_configs}.

\begin{figure}[ht!]
\centering
\includegraphics[width=120mm]{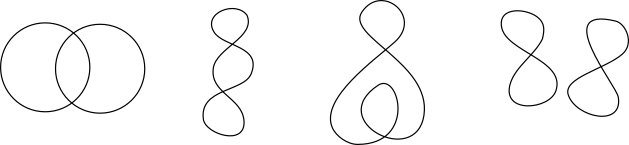}
\caption{Configurations with exactly two crossings. The signs of the crossings are not shown because they are irrelevant to the application of the theorem.}
\label{dimension2_configs}
\end{figure}

Resolutions $L_0$ for the first three configurations satisfying the conditions in Theorem \ref{thm_in_terms_of_L0} are depicted in Figure \ref{dim2_configs_resolved}, with the circle $c_0$ shown in blue, and the relevant $1$-handle attachments depicted by the green arcs.

\begin{figure}[ht!]
\centering
\includegraphics[width=100mm]{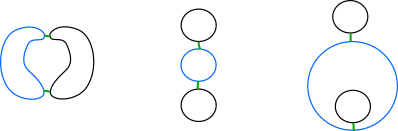}
\caption{Resolutions for the first three configurations in Figure \ref{dimension2_configs} with a single circle (the blue one) such that all handle attachments have exactly one end on it.}
\label{dim2_configs_resolved}
\end{figure}

For the last configuration, the one with two figure eights, can be viewed as a partial resolution of $T_{2,4}$, with the two crossings marked with blue dots in Figure \ref{T2n_marked} already resolved. Thus, the cobordisms arising for this configuration also arise in the cube for $T_{2,4}$, and as explained in the previous example, satisfy the conditions of the theorem.

\begin{figure}[ht!]
\centering
\includegraphics[width=20mm]{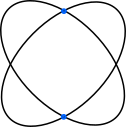}
\caption{}
\label{T2n_marked}
\end{figure}

\end{example}

\vspace{0.5 cm}

\noindent\textbf{Acknowledgements} This project originally started as part of my PhD thesis under the supervision of Tom Mrowka, and I would like to thank him for suggesting the topic and for insights, discussion, and encouragement. I would like to thank Jianfeng Lin for helpful discussions and encouragement, and Mark Chilenski for helpful suggestions of computational tools and libraries for studying differential inequalities.

\section{Background}

All of our monopole Floer homology and Khovanov homology groups will be over the field of two elements, $\ff_2$. 

In \cite{Bloom}, Bloom constructed a spectral sequence relating the Khovanov homology of a link in $S^3$ to the monopole Floer homology of the double branched cover of the link, thus:

For a projection $P_L$ of the link with $k$ crossings, consider its resolutions $L_I$ for $I \in \{0,1\}^k$. Let $Y$ be the double branched cover of $L \subset S^3$, and let $Y(I)$ be the double branched cover of $L_I \subset S^3$. Let $\check{C}$ denote the complex for the ``to'' version of the monopole Floer chain complex, as in the notation of \cite{KM_monopole_book}. For the complex $\wtl{C}$ given by the cone of the map $U_\dagger$ on $\check{C}$,  Bloom constructed the filtered complex
\[X = \oplus_{I \in \{0,1\}^l} \wtl{C}(Y(I))\]
with differential $\wtl{D}$ given by $\wtl{D}^I_J:\wtl{C}(Y(I)) \to \wtl{C}(Y(J))$ for $I \leq J$. The aforementioned spectral sequence is the associated spectral sequence of this complex; Bloom showed that the homology of its $E_1,d_1$ page is the reduced Khovanov homology of the link, and the spectral sequence abuts to $\wtl{HM}(Y)$, where $\wtl{HM}$ is a version of monopole Floer homology that Bloom constructed; it is the homology associated to $\wtl{C}$.

The differentials $\wtl{D}^I_J$ in the spectral sequence arise from counting monopoles on a cobordism $W(IJ)$ between $Y(I)$ and $Y(J)$, over a family of metrics on $W(IJ)$ of dimension $k-1$ where $I$ and $J$ differ at $k$ crossings.

To see this family of metrics, let us first describe the cobordism $W(IJ)$. Note that $L_I$ and $L_J$, being resolutions of $L$, are contained in $S^2$, which we view in $S^3$ as an equatorial $S^2$. Moreover, $L_J$ is obtained from $L_I$ by taking $k$ small disjoint 3-balls $B$ where $L_I \cap B$ looks like the left hand side of Figure \ref{resolutions} and replacing the interiors of these balls with the right hand side of Figure \ref{resolutions} yields $L_J$. In particular, $I$ and $J$ both intersect each of the $3$-balls $B$ in two arcs, and their intersection with $\partial B$ is the same four points. 

\begin{figure}[ht!]
\centering
\includegraphics[width=60mm]{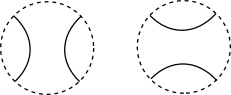}
\caption{The $0$ and $1$ resolutions of a crossing.}
\label{resolutions}
\end{figure}

Then there is a cobordism $S_{IJ} \subset S^3 \times \rr$ from $L_I$ to $L_J$ given by attaching the natural 1-band for each crossing where the resolutions differ, as in Figure \ref{one_saddle}.

\begin{figure}[ht!]
\centering
\includegraphics[width=60mm]{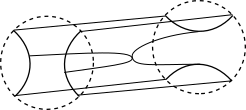}
\caption{The cobordism $S_{IJ}$ near a crossing where the $I$ and $J$ resolutions differ.}
\label{one_saddle}
\end{figure}

The cobordism $W(IJ)$ is the double branched cover of $S^3 \times \rr$ along $S_{IJ}$. This has a natural map to $\rr$, where the pre-image of $t \in \rr$ is the double branched cover of $S^3 \times \{t\}$ along $S_{IJ} \cap S^3 \times \{t\}$.

If we think of $S_{IJ}$ as a movie in $S^3$ from $L_I$ to $L_J$, the action all happens within $k$ disjoint balls $B_i$ around the $k$ crossings where the $I$ and $J$ resolutions differ. Thus, we may consider for each tuple $(t_1, \ldots, t_k)$ with $t_i \in \rr$, a cobordism $S_{IJ, (t_1,\ldots ,t_k)} \subset S^3 \times \rr$ where outside of the $B_i$, all $S_{IJ,(t_1, \ldots, t_k)}$ are the same and are constant in the $\rr$ factor, and the saddle point in the ball $B_i$ occurs at $t_i \in \rr$ for $S_{IJ,(t_1, \ldots, t_k)}$. 

Varying these $t_i \in \rr$ gives a $k$ dimensional family of cobordisms in $S^3 \times \rr$. Restricting to $\sum t_i = 0$ gives a $k-1$ dimensional family.

Note that the double branched cover of $B_i$ at the corresponding arcs in $L_I$ is an $S^1 \times D^2$, and the double branched cover of its boundary $S^2$ at the four points of its intersection with $L_I$ is $S^1 \times S^1$. The cobordism $W(IJ)$ can be seen as $Y(I) \times (-\infty, -\tau] \cup Y(J) \times [\tau, \infty)$ with $k$ 2-handles attached between times $-\tau$ and $\tau$, with these two handles attached at $S^1 \times D^2$s that are disjoint from each other.

As in the above description of embeddings of $S_{IJ,(t_1,\ldots, t_k)}$ in $S^3 \times \rr$ parametrized by tuples $(t_1,\ldots, t_k)$ representing the times at which the saddles are added, we may consider families of metrics on $W(IJ)$ parametrized by tuples $(t_1,\ldots, t_k) \in \rr^k$, where $t_i$ represents the $\rr$ coordinate of when the $i$th $2$-handle is attached. 

Varying these $t_i \in \rr$ gives a $k$ dimensional family of metrics on $W(IJ)$. Restricting to $\sum t_i = 0$ gives a $k-1$ dimensional family. Bloom's spectral sequence is built by counting monopoles on families of metrics on $W(IJ)$ of this form. That is, they are parametrized over a $k-1$ dimensional family $\{(t_1, \ldots, t_k) \in \rr^k| \sum t_i = 0\}$, where the $t_i$ correspond to the time at which the $i$th $2$-handle is attached. That is, the family of metrics parametrizes translation of handles in the $\rr$ direction.

It was shown in Proposition 4.6.1 of the book, \cite{KM_monopole_book}, that on a closed manifold with positive scalar curvature, all solutions to the monopole equations are reducible. In the context of the cobordisms in the aforementioned spectral sequence, this means that if we could choose the family of metrics parametrising translation of the handles in the $\rr$ direction such that each metric in the family has positive scalar curvature, then the maps $\wtl{D}^I_J$ contain only terms coming from reducible solutions, which should make them easier to calculate.

\section{What the cobordisms look like}\label{cobordism_form}


Let us consider the cobordism $W(IJ)$ in the notation of the previous section, a cobordism from $Y(I)$ to $Y(J)$, where $Y(I)$ and $Y(J)$ are the double-branched covers of the resolutions $L_I$ and $L_J$ in $S^3$. 

In addition to $L_I$ and $L_J$, we will consider the resolution $L_0$, defined in the introduction, which is the resolution of $L_0$ such that the boundary of each black region is one of the unlink components.


We can consider $L_0$ as a collection of circles $c_0,\ldots c_n$ in $S^2$, such that the circle corresponding to $c_0$ has all the other circles inside it, and for $i,j>0$, $c_i$ and $c_j$ do not contain each other. For example, for the unlink projection from Figure \ref{checkerboard}, the circles are depicted in Figure \ref{loops_with_crossings2}.

\begin{figure}[ht!]
\centering
\includegraphics[width=80mm]{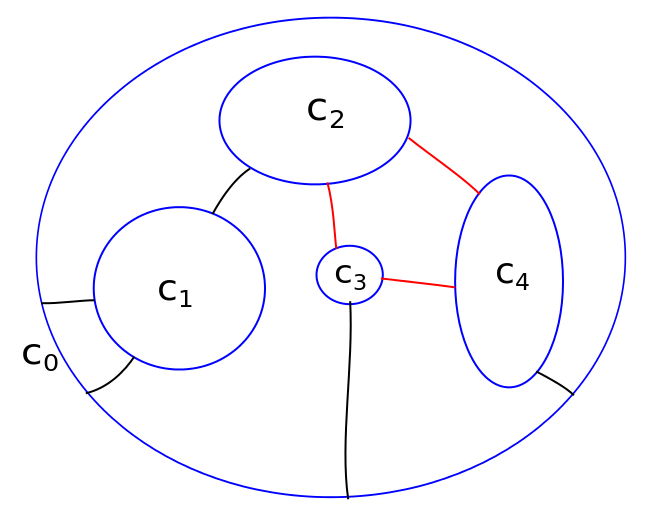}
\caption[A resolution of a link with arcs representing crossings changed in cobordism]{The $L_0$ for the $L8N1$ projection, drawn with $c_0$ containing the others.}
\label{loops_with_crossings2}
\end{figure}

The double branched cover $Y_0$ of $(S^3,L_0)$ can be thought of as $\#^n(S^1 \times S^2)$, where for $i\in \{1,\ldots n\}$, $c_i$ sits as $\{0\} \times \gamma \subset S^1 \times S^2$ in the $i$th component, where $\gamma \subset S^2$ is an equator, and $c_0$ sits as $\#^n \{\pi\} \times \gamma$. In this picture, the crossings between the $c_i$ and $c_0$ for $i \neq 0$ can be thought of as certain points on the copies of $\{0\} \times \gamma$, and their corresponding points in the $\{\pi\} \times \gamma$. Each of these crossings is over a point $x_i \in S^2$, where we have projected to the $S^2$ component of $S^1 \times S^2$.

For example, if $n=1$, ie there are only two circles, then the double branched cover is $S^1 \times S^2$, as depicted in Figure \ref{S1xS2}. In the figure, as we go around the depicted great circle in $S^2$, the magenta star traces out $c_1$ and the blue star traces out $c_0$. Crossings correspond to specific points $x_l$ on the equator of the $S^2$, and the embedding $\phi:S^1 \times D^2 \to Y$ of the corresponding handle attachment has image given by the $S^1$ bundle over a disk $D_l$ around $x_l$.

\begin{figure}[ht!]
\centering
\includegraphics[width=60mm]{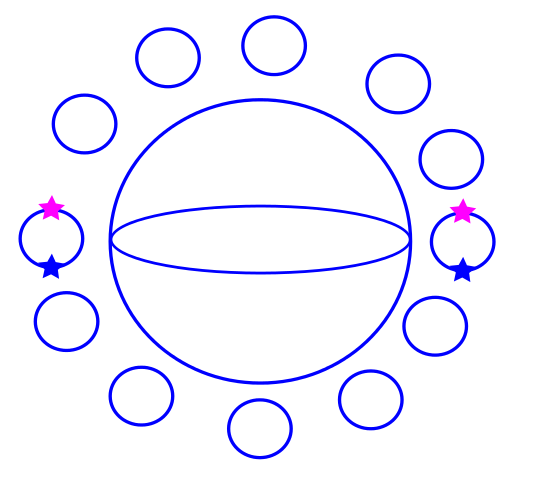}
\caption[$S^1 \times S^2$, the double branched cover of the two-component unlink]{The double branched cover for $c_0 \cup c_1$. }
\label{S1xS2}
\end{figure}

For a cobordism that satisfies the conditions of the theorem, the $L_0$ can be drawn as a circle, $c_0$, with smaller circles $c_1,\ldots c_{l}$ inside it, where all crossings are between $c_0$ and $c_i$ for $i>0$, as in Figure \ref{okay_graph}. If the circles are $c_0, \ldots c_l$ with all edges having one end on $c_0$, then the double branched cover is the connected sum of $l$ copies of Figure \ref{okay_graph}. We depict these crossings as arcs between circles, with black arcs depicting where $L_0$ and $L_I$ are resolved the same way and red arcs depicting where they are resolved differently. In particular, $L_I$ looks like $L_0$ but with a band added along each red arc, and $L_J$ looks like $L_0$ but with a band added along each black arc. Thus, in the case of the $L_0$ of Figure \ref{okay_graph}, the resolution $L_I$ is depicted in Figure \ref{okay_graph_LI}.

\begin{figure}[ht!]
\centering
\includegraphics[width=80mm]{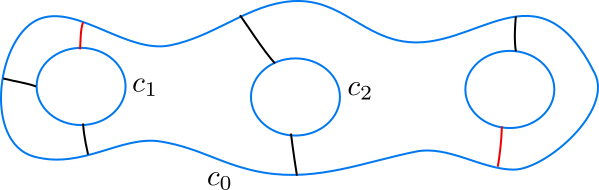}
\caption{The circles $c_0, c_1, \ldots , c_n$ of $L_0$, with an edge shown for each crossing at which $L_I$ and $L_J$ differ. The arcs drawn between the circles correspond to crossings, with black arcs depicting where $L_0$ and $L_I$ are resolved the same way and red arcs depicting where they are resolved differently.}
\label{okay_graph}
\end{figure}

\begin{figure}[ht!]
\centering
\includegraphics[width=80mm]{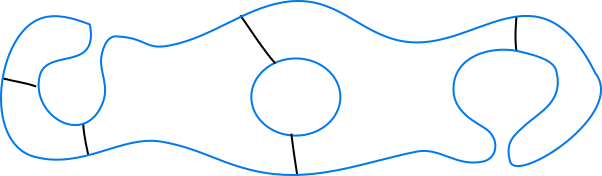}
\caption{The $L_I$ resolution for the $L_0$ resolution depicted in Figure \ref{okay_graph}.}
\label{okay_graph_LI}
\end{figure}

Stretching out the parts between the $c_i$, we can write the double branched cover of this as the connected sum $(S^1 \times S^2)^{\# l}$, where the $i$th copy of $S^1 \times S^2$ corresponds to the double branched cover of $c_0 \coprod c_i$, and the neck between copies of $S^1 \times S^2$, which is an $S^2 \times I$, is the double branched cover of $S^2 \times I$ with respect to $(\text{2 points}) \times I$, where the two points are where an $S^2$ meets $c_0$, as in Figure \ref{okay_graph_cut}.

\begin{figure}[ht!]
\centering
\includegraphics[width=80mm]{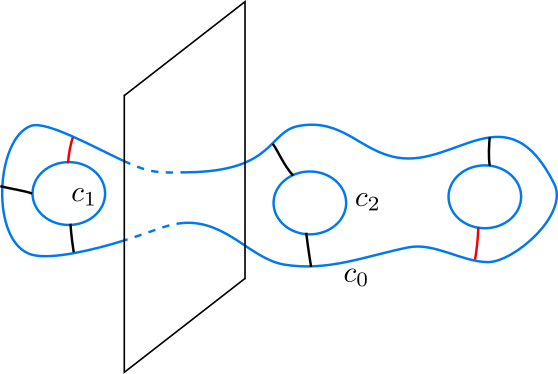}
\caption{The double branched cover of the $S^2$ (depicted as a plane) along the two points at which it meets $c_0$ is an $S^2$.}
\label{okay_graph_cut}
\end{figure}

Let $L_1$ be the resolution of $L$ with all crossings resolved in the opposite way from $L_0$. Then the cobordism from $L_0$ to $L_1$ consists of adding a 1-handle along where each of the crossings are marked. In the double branched cover, each of the arcs correspond to a circle, and the attachment of the 1-handle along the arc corresponds to attachment of a 2-handle along an $S^1 \times D^2$ tubular neighbourhood of the circle.

Then the double branched cover of $L_I$ is that of $L_0$ with $2$-handles attached along the circle pre-images of the red arcs and the double branched cover of $L_J$ is that of $L_0$ with $2$-handles attached along the circle pre-images of the black arcs.

The cobordism $W(IJ)$ can be seen as reversing the red-arc handle attachments from $L_I$ to $L_0$, and then attaching the black-arc handles from $L_0$ to $L_J$. The double branched cover for the $L_0$ of Figure \ref{okay_graph} is depicted in Figure \ref{S1xS2_connect_sums3}, along with the circles along whose tubular neighbourhoods 2-handles are attached.

\begin{figure}[ht!]
\centering
\includegraphics[width=100mm]{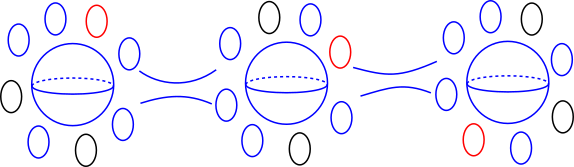}
\caption{The double branched cover of the $L_0$ in \ref{okay_graph}. The double branched cover of $L_I$ is obtained from this by attaching a $2$-handle along a tubular neighbourhood of each red circle, and the double branched cover of $L_J$ is obtained from this by attaching a $2$-handle along a tubular neighbourhood of each black circle.}
\label{S1xS2_connect_sums3}
\end{figure}

\section{Metrics of positive scalar curvature}

Let $Y(0)$ and $Y(1)$ be the double branched covers of $L_0$ and $L_1$. As discussed in the previous section, $Y(I)$ looks like $Y(0)$ of $L_0$, which is a connected sum of $S^1 \times S^2$s, but with 2-handles attached at the tubular neighbourhoods of certain fibre $S^1$s, and $Y(J)$ is similar, with the two-handles attached at other fibre $S^1$s.

The product metric on $S^1 \times S^2$ where the metric on $S^2$ is the standard sphere metric has positive scalar curvature.

It was shown by Gromov and Lawson in \cite{GL_psc} and by Schoen and Yau in \cite{SY_psc} that for manifolds $M$ and $M'$ if $M'$ is obtained from $M$ by surgery in codimension at least $3$ and $M$ has a metric of positive scalar curvature, then $M'$ does as well. In the case of the connected sum construction for 3-manifolds $M_1$ and $M_2$, which we can see as $0$ surgery on the 3-manifold $M_1 \coprod M_2$, the new metric constructed on $M_1 \# M_2$ can be taken to be the same as the original metric on $M_1$ and $M_2$ away from a small $B^3$ in each; the connected sum operation is being performed in the balls.

Thus there is a metric of positive scalar curvature on $\#^n S^1 \times S^2$ which is the product metric on each $S^1 \times S^2$s away from small balls at which it is attached to the other $S^1 \times S^2$s. 

We will construct metrics on the $2$-handles $D^2 \times D^2$ attached along $S^1 \times D^2$ tubular neighbourhoods of fibres $S^1$ that agree with this metric on $S^1 \times D^2 \subset S^1 \times S^2$. Moreover we will show that it is possible to construct such metrics for arbitrarily small disks $D^2 \subset S^2$, where ``arbitrarily small'' here means we will take $D^2 \subset S^2$ given by
\[D^2= \{\(\cos (\theta ) \cos (\phi ),\cos (\theta ) \sin (\phi ),\sin (\theta )\)| \theta>\theta_0\} \subset S^2\]
for $\theta_0$ arbitrarily close to $\pi/2$, that is for caps centered at the north pole of the $S^2$ with arbitrarily small radius relative to the radius of the sphere.

This gives a metric on the cobordism from $Y(0)$ to $Y(1)$ given by attaching $2$ handle to each of the circles corresponding to crossings. Moreover, since the metrics are constant outside of the small tubular neighbourhoods of the attaching circles, it gives a family of metrics from $Y(0)$ to $Y(1)$ parametrized by the time at which the handles are attached. It also gives a family of metrics on the cobordism $W(IJ)$ parametrized by the same, because the parts inside the boundary $S^1 \times S^1$ of the tubular neighbourhoods of the attaching $S^1$s can be run backwards from the attached version (the $1$-resolution) to the un-attached version (the $0$-resolution).

\subsection{Surgery on the metric $S^1 \times D^2 \subset S^1 \times S^2$}\label{basic_handle}

As mentioned above, for arbitrarily small $D^2 \subset S^2$, we construct a metric of positive scalar curvature on the attaching handle on $S^1 \times D^2 \subset S^1 \times S^2$ that agrees with the standard metric (the product metric with the standard sphere metric on the $D^2$) near the boundary.

\begin{figure}[ht!]
\centering
\includegraphics[width=80mm]{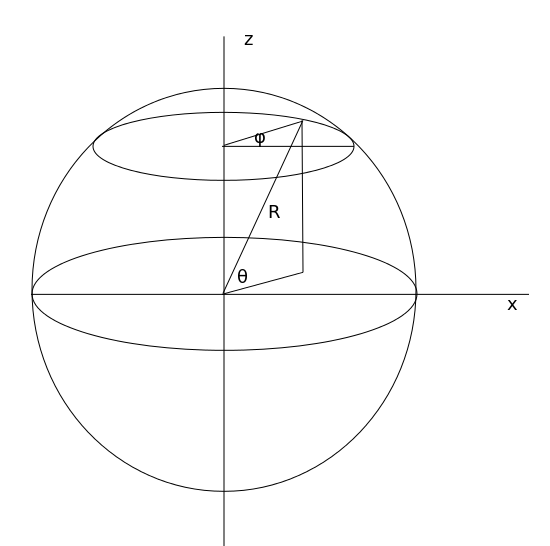}
\caption[]{}
\label{polar_coords}
\end{figure}

Let $\theta_0$ and $R$ be positive real numbers with $\theta_0<\pi/2$.

In a sphere $S^2_R$ of radius $R$, let $D^2_{R,\theta_0}$ be the cap in $S^2_R$ spanning angle $\pi-2\theta_0$, that is, in spherical coordinates, $D^2_{R,\theta_0}$ is given by $\{(\theta,\phi)|\theta>\theta_0, \phi \in [0, 2 \pi]\}$. In this section, we give a metric on the $1$ surgery associated to the $S^1$ over the north pole, viewed as a $D^2 \times D^2$ attached to $S^1 \times D^2_{R, \theta_0} \subset S^1 \times S^2_R$. 

Parametrising the cobordism using a ``time'' axis $t$, the metric looks like this: for time $t<-T$, it is cylindrical metric $S^1 \times D^2 \times \rr$, in the middle region where the cobordism is happening, $-T<t<T$, it is the submanifold metric on the submanifold of $\rr^2 \times D^2 \times \rr$, where we take a circle in the $\rr^2$ at each time, but as time increases, the radius of the circle over the north pole in $D^2$ decreases, so that for $t>T$, we will have a cylindrical metric like that on the upper hemisphere in Figure \ref{basic_handle}; the latter will be the metric on the other end of the cobordism. 

\begin{figure}[ht!]
\centering
\includegraphics[width=80mm]{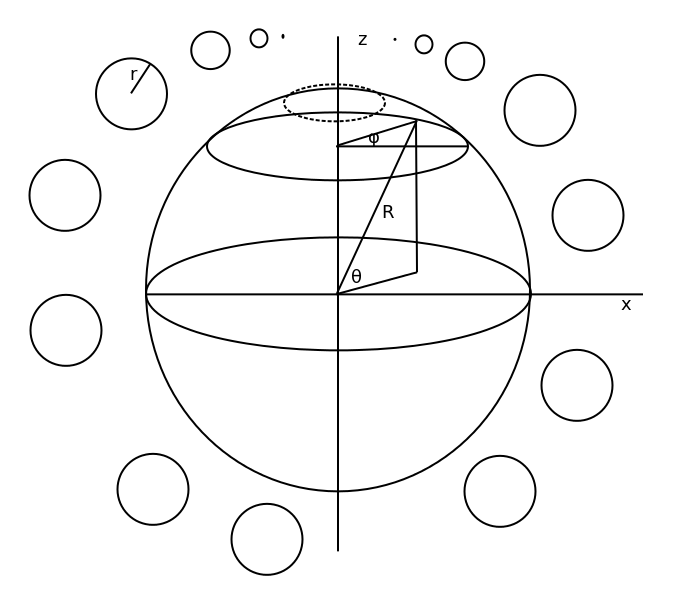}
\caption{An $S^3$ obtained by attaching a handle to $S^1 \times S^2$. The cobordism between $S^1 \times S^2$ and this $S^3$, if seen as a movie, involves the $S^1$s over the north pole becoming progressively smaller until they cease to exist.}
\label{basic_handle}
\end{figure}

Let $r:[-\pi/2,\pi/2]  \times \rr \to \rr$ be a smooth function such that there are no $(\theta,t)$ for which $r$, ${r_\theta}$, and ${r_t}$ all vanish, and such that $r$ is constant in $\theta$ for an open neighbourhood of $\theta = \pi/2$.

Consider $X \subset \rr^6$ cut out by the equations $h(x,y,z,u,w,t)=(0,0)$, for $h:\rr^6 \to \rr^2$ given by
\[h(x,y,z,u,w,t) = (x^2+y^2+z^2-R^2,u^2+w^2-r(\theta(x,y,z),t))\]
where $\theta(x,y,z) =\arctan(z/\sqrt{x^2+y^2})$.

The manifold cut out is the cobordism described above: the $x^2+y^2+z^2=R^2$ part cuts out the $S^2$, and the $u^2+w^2-r(\theta(x,y,z),t)$ part signifies how the radii of the circles over each point in $S^2$ change as you move along $S^2$ (closer, or farther away from the north pole) and along time.

\begin{condition}\label{r_boundary_conditions} Fix $R$ and $\theta_0$. In terms of $r$, the boundary conditions on the cobordism are that it:
\begin{enumerate}
\item is constant in $t$ for $t < -T$ and for $t > T$
\item is constant in both $t$ and $\theta$ for $\theta \leq \theta_0$ or $t \leq -T$.
\item is constant in $\theta$ for $\theta \geq \pi/2-\varepsilon_1$ for some positive $\varepsilon_1$.
\item For $t>T$,  $r(\pi/2,t)<0$
\item Is non-increasing in $\theta$ and $t$.
\item There are no values of $(\theta,t)$ for which $r_t, r_\theta, r$ all vanish.
\end{enumerate}
\end{condition}

We start by showing that the equations cut out a smooth submanifold of $\rr^6$. 

Note that
\[dh(x,y,z,u,w,t) = {\left[
\begin{array}{cccccc} 
2x  & 2y & 2z & 0 & 0 & 0 \\  
{-r_\theta} \ddx{\theta}{x} & {-r_\theta} \ddx{\theta}{y} &  {-r_\theta} \ddx{\theta}{z} & 2u & 2w & {-r_t} \\ 
\end{array} 
\right]}\]
\[=\left[\begin{array}{cccccc} 
2x  & 2y & 2z & 0 & 0 & 0 \\  
\frac{r_\theta}{R^2}\cdot\frac{x z}{\sqrt{x^2+y^2}} & \frac{r_\theta}{R^2}\cdot\frac{y z}{\sqrt{x^2+y^2}} &  -\frac{r_\theta}{R^2} \sqrt{x^2+y^2} & 2u & 2w & {-r_t} \\ 
\end{array}\right]\]
and this derivative is smooth in an open neighbourhood of $\theta=\pi/2$ because $\(\ddx{}{\theta}\)^ir=0$ for $i \geq 1$, and $r_t$ has no dependence on $x,y,z,u,w$ and it is smooth in $t$. For $\theta$ away from $\frac{\pi}{2}$, it is easy to see that $dh$ is smooth on $X$, because $r$, $\sqrt{x^2 + y^2}$, $\frac{xz}{\sqrt{x^2+y^2}}$, and $\frac{xz}{\sqrt{x^2+y^2}}$ are all smooth away from an open neighbourhood of $\theta = \pi/2$. 


Let us show that $dh$ has rank $2$. If not, then we must have $r_t = 0$ and $u=w=0$, so $r=0$. Thus, by our assumption that $r,r_\theta,$ and $r_t$ do not simultaneously vanish, we have $r_\theta \neq 0$. However,
\[\frac{r_\theta}{R^2}\cdot\frac{x z}{\sqrt{x^2+y^2}} , \frac{r_\theta}{R^2}\cdot\frac{y z}{\sqrt{x^2+y^2}} ,  -\frac{r_\theta}{R^2} \sqrt{x^2+y^2}\]
is a vector of length $\frac{r_\theta}{R}$ orthogonal to $(2x,2y,2z)$, so it cannot be parallel to it.

Thus, $dh$ has rank $2$, so the submanifold cut out by $h(x,y,z,u,w,t)=0$ is smooth.

We now construct a smooth function $r$, satisfying the above boundary conditions such that the cobordism cut out by $h$ has positive scalar curvature. First, we construct such a function when we are allowed to vary $R$: 

\begin{lemma} For any positive $\theta_0<\pi/2$, for sufficiently large $R>0$ there is a function $r:[0,\pi/2] \times \rr \to \rr$ satisfying the boundary conditions above, such that the cobordism cut out has postive scalar curvature.
\end{lemma}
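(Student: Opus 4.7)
The plan is to identify the induced metric on $X$ as a doubly warped product over a 2-dimensional base and then apply a Gromov--Lawson style surgery construction to the profile function.

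\emph{Step 1 (Form of the metric).} Using the chart $(\theta,\phi,\psi,t)$ defined by $(x,y,z)=R(\cos\theta\cos\phi,\cos\theta\sin\phi,\sin\theta)$ and $(u,w)=\sqrt{r}(\cos\psi,\sin\psi)$ (valid where $r>0$), a direct computation gives
\[
 ds^2 \;=\; g_\Sigma \;+\; (R\cos\theta)^2\,d\phi^2 \;+\; r\,d\psi^2, \qquad g_\Sigma \;=\; R^2\,d\theta^2 + dt^2 + d\rho^2,
\]
where $\rho=\sqrt{r(\theta,t)}$. Geometrically, $\Sigma$ is the graph $\rho=\sqrt{r(\theta,t)}$ embedded in $\rr^3$ with the flat metric $R^2\,d\theta^2+dt^2+d\rho^2$, and $X$ is the total space of a doubly warped product over $\Sigma$ with torus fibre and warping functions $f_1=R\cos\theta$ and $f_2=\sqrt{r}$.

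\emph{Step 2 (Scalar curvature formula).} The standard formula for the scalar curvature of a doubly warped product with circle fibres gives
\[
\mathrm{Scal}(X)\;=\;\mathrm{Scal}(\Sigma) - \tfrac{2\Delta_\Sigma f_1}{f_1} - \tfrac{2\Delta_\Sigma f_2}{f_2} - \tfrac{2\langle \nabla f_1,\nabla f_2\rangle_{\Sigma}}{f_1 f_2}.
\]
The plan is to construct $r$ so that this quantity is strictly positive on the open set $\{r>0\}$, and then invoke continuity together with Condition~\ref{r_boundary_conditions}(6) to obtain positivity on the whole cobordism.

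\emph{Step 3 (Profile).} I would define $r$ piecewise. In the product region ($t<-T$ or $\theta\leq\theta_0+\varepsilon$), take $r\equiv r_0$ constant, so $g$ is a genuine product metric and $\mathrm{Scal}=2/R^2>0$. On the ``neck'' (for $t\in[-T,T]$ and $\theta$ near $\pi/2$), design $\rho=\sqrt{r}$ to be monotonically decreasing and strictly concave in both $\theta$ and $t$, using a Gromov--Lawson bending profile, so that $\Delta_\Sigma\rho<0$; this forces $-2\Delta_\Sigma\rho/\rho$ to be large and positive when $\rho$ is small. On the ``cap'' ($t>T$, $\theta$ near $\pi/2$), take $r<0$ so the fibre circle has closed off, with $r$ constant in $\theta$ in a neighbourhood of $\pi/2$ as required by Condition~\ref{r_boundary_conditions}(3).

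\emph{Main obstacle.} The principal difficulty is verifying positivity uniformly across the neck. There, $\mathrm{Scal}(\Sigma)$ and $\Delta_\Sigma f_1/f_1$ are each of order $1/R^2$, and the cross term $\langle\nabla f_1,\nabla f_2\rangle_\Sigma/(f_1 f_2)$ is proportional to $\sin\theta\,\rho_\theta/(R\cos\theta\,\rho)$, which can be of either sign near $\theta=\pi/2$. Condition~\ref{r_boundary_conditions}(3) neutralises the apparent singularity from $\cos\theta\to 0$ by forcing $\rho_\theta=0$ near the pole, and taking $R$ large ensures the angular arclength $R(\pi/2-\theta_0)$ is long enough to accommodate a gentle Gromov--Lawson bend of $\rho$ --- one for which $|\nabla_\Sigma\rho|$ and $|\nabla^2_\Sigma\rho|$ remain bounded, yet $-\Delta_\Sigma\rho/\rho$ is very large in the small-$\rho$ regime. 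The detailed calibration follows the template of \cite{GL_psc}, adapted to the present two-warped-product setting.
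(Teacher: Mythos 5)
Your Step 1 and Step 2 are correct and, in my view, cleaner than the paper's approach: recognizing the induced metric as a doubly warped $T^2$-bundle over the graph surface $\Sigma$ with warping functions $f_1 = R\cos\theta$ and $f_2=\sqrt{r}$, and writing
\[
\mathrm{Scal}(X) = \mathrm{Scal}(\Sigma) - 2\tfrac{\Delta_\Sigma f_1}{f_1} - 2\tfrac{\Delta_\Sigma f_2}{f_2} - 2\tfrac{\langle\nabla f_1,\nabla f_2\rangle_\Sigma}{f_1 f_2},
\]
is a legitimate and more conceptual route than the paper's raw computation of $\Gamma^k_{ij}$, $R^\rho_{\sigma\mu\nu}$, $\mathrm{Ric}$, $S$. (One could check, using $\rho=\sqrt{r}$, that this expression agrees term-by-term with the paper's $S$.) The overall strategy — a profile $r(\theta,t)$ plus taking $R$ large to beat subleading terms — is also what the paper does.

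The genuine gap is in Step 3 and the ``main obstacle'' paragraph. First, the profile you ask for cannot exist: Condition~\ref{r_boundary_conditions}(1)--(3) forces $r$ (hence $\rho$) to be constant in $t$ on $\{t<-T\}$ and on $\{t>T\}$, and for $\theta$ just above $\theta_0$ the function $\rho(\theta,\cdot)$ is a smooth transition between two \emph{positive} constants; a nonconstant smooth function matching constants on both ends necessarily has $\rho_{tt}>0$ somewhere, so $\rho$ cannot be concave in $t$ throughout (and likewise in $\theta$). The reduction ``make $-\Delta_\Sigma\rho/\rho$ large where $\rho$ is small'' therefore misses the real danger zone: the inflection region, where $\Delta_\Sigma\rho>0$ while $\rho$ is not small, so $-2\Delta_\Sigma f_2/f_2<0$ must instead be beaten by the $O(1/R^2)$ term $\mathrm{Scal}(\Sigma) - 2\Delta_\Sigma f_1/f_1$ \emph{and} by the time-stretching $T\sim R$. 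This is precisely where the paper's proof does its work (the block decomposition $A,B,C,D,E$, the choice $r=r_0-P(t)Q(\theta)$ with smoothed step functions satisfying $\lim_{x\to0}s(x)/(s'(x)x^2)>0$, $\lim_{x\to0}s'(x)/(s''(x)x^2)>0$, and the shrinking of $r_0$ to confine $\{r\geq0\}$ to a boundary strip), and your proposal offers no substitute for it. Second, the appeal to ``the template of \cite{GL_psc}'' is misleading: Gromov–Lawson bending is a codimension-$\geq3$ device whose positivity comes from the intrinsic curvature of the normal sphere $S^{k-1}$ ($k\geq3$), which blows up like $1/\rho^2$; here the degenerate fiber is $S^1$, which contributes no intrinsic curvature at all, and the surgery is codimension $2$. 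The only positive term available from the $f_2$-direction is $-2\Delta_\Sigma f_2/f_2$, which is of a weaker order, so the GL torpedo calibration does not transfer; the positivity really does require the explicit profile estimates the paper carries out.
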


\begin{proof}

Away from the north and south poles and away from $r=0$, parametrize the submanifold by:
\[(\theta,\phi,t,\alpha) \mapsto \(R \cos (\theta ) \cos (\phi ),R \cos (\theta ) \sin (\phi ),R \sin (\theta ),\sqrt{r} \cos (\alpha),\sqrt{r} \sin (\alpha),t\)\]

Recall that for an embedding $X$ of a submanifold we have that the induced metric tensor defined on the submanifold is given by
\[g_{ab} = \partial_a X^\mu \partial_b X^\nu g_{\mu\nu}.\]

Thus, away from $\theta = \pm \frac{\pi}{2}$ and away from $r=0$ metric $g_{\mu\nu}$ in $(\theta, \phi, t, \alpha)$ coordinates is 
\[
g_{\mu\nu}=\left(
\begin{array}{cccc}
 R^2+\frac{1}{4 r}{r_\theta}^2 & 0 & \frac{1}{4 r} {r_t} {r_\theta}& 0 \\
 0 & R^2 \cos ^2(\theta ) & 0 & 0 \\
 \frac{1}{4 r} {r_t} {r_\theta}& 0 & \frac{1}{4 r}{r_t}^2+1 & 0 \\
 0 & 0 & 0 & r \\
\end{array}
\right)
\]
and $g^{\mu \nu}$ is
\[
g^{\mu \nu}=\left(
\begin{array}{cccc}
 \frac{{r_t}^2+4 r}{{r_t}^2 R^2+4 r R^2+{r_\theta}^2} & 0 & -\frac{{r_t} {r_\theta}}{{r_t}^2 R^2+4 r R^2+{r_\theta}^2} & 0 \\
 0 & \frac{\sec ^2(\theta )}{R^2} & 0 & 0 \\
 -\frac{{r_t} {r_\theta}}{{r_t}^2 R^2+4 r R^2+{r_\theta}^2} & 0 & \frac{4 r R^2+{r_\theta}^2}{{r_t}^2 R^2+4 r R^2+{r_\theta}^2} & 0 \\
 0 & 0 & 0 & \frac{1}{r} \\
\end{array}
\right)
\]

Next we compute the $\Gamma^{k}_{ij}$ using formula
\[ \Gamma^{k}_{ij} = \sum_m \frac{1}{2}g^{km}\(\partial_jg_{im}+\partial_ig_{jm}-\partial_mg_{ij}\).\] These are given by the following matrices, with indices in order $\theta,\phi,t,\alpha$. 

\[\Gamma^{\theta}_{ij} = \left(
\begin{array}{cccc}
 -\frac{{r_\theta} \left({r_\theta}^2-2 r {r_{\theta\theta}}\right)}{2 r \left({r_t}^2 R^2+4 r R^2+{r_\theta}^2\right)} & 0 & \frac{{r_\theta} (2 r {r_{\theta t}}-{r_t} {r_\theta})}{2 r
   \left({r_t}^2 R^2+4 r R^2+{r_\theta}^2\right)} & 0 \\
 0 & \frac{R^2 \left({r_t}^2+4 r\right) \cos (\theta) \sin (\theta)}{{r_t}^2 R^2+4 r R^2+{r_\theta}^2} & 0 & 0 \\
 \frac{{r_\theta} (2 r {r_{\theta t}}-{r_t} {r_\theta})}{2 r \left({r_t}^2 R^2+4 r R^2+{r_\theta}^2\right)} & 0 & -\frac{\left({r_t}^2-2 r {r_{tt}}\right) {r_\theta}}{2 r
   \left({r_t}^2 R^2+4 r R^2+{r_\theta}^2\right)} & 0 \\
 0 & 0 & 0 & -\frac{2 r {r_\theta}}{{r_t}^2 R^2+4 r R^2+{r_\theta}^2} \\
\end{array}
\right)\]

\[\Gamma^{\phi}_{ij}= \left(
\begin{array}{cccc}
 0 & -\tan (\theta) & 0 & 0 \\
 -\tan (\theta) & 0 & 0 & 0 \\
 0 & 0 & 0 & 0 \\
 0 & 0 & 0 & 0 \\
\end{array}
\right)\]

\[\Gamma^{t}_{ij} = \left(
\begin{array}{cccc}
 -\frac{R^2 {r_t} \left({r_\theta}^2-2 r {r_{\theta\theta}}\right)}{2 r \left({r_t}^2 R^2+4 r R^2+{r_\theta}^2\right)} & 0 & \frac{R^2 {r_t} (2 r {r_{\theta t}}-{r_t} {r_\theta})}{2 r
   \left({r_t}^2 R^2+4 r R^2+{r_\theta}^2\right)} & 0 \\
 0 & -\frac{R^2 {r_t} {r_\theta} \cos (\theta) \sin (\theta)}{{r_t}^2 R^2+4 r R^2+{r_\theta}^2} & 0 & 0 \\
 \frac{R^2 {r_t} (2 r {r_{\theta t}}-{r_t} {r_\theta})}{2 r \left({r_t}^2 R^2+4 r R^2+{r_\theta}^2\right)} & 0 & -\frac{R^2 {r_t} \left({r_t}^2-2 r {r_{tt}}\right)}{2 r
   \left({r_t}^2 R^2+4 r R^2+{r_\theta}^2\right)} & 0 \\
 0 & 0 & 0 & -\frac{2 r R^2 {r_t}}{{r_t}^2 R^2+4 r R^2+{r_\theta}^2} \\
\end{array}
\right)\]

\[\Gamma^{\alpha}_{ij}=\left(
\begin{array}{cccc}
 0 & 0 & 0 & \frac{{r_\theta}}{2 r} \\
 0 & 0 & 0 & 0 \\
 0 & 0 & 0 & \frac{{r_t}}{2 r} \\
 \frac{{r_\theta}}{2 r} & 0 & \frac{{r_t}}{2 r} & 0 \\
\end{array}
\right) \]

Next, we compute the Riemann curvature tensor, using the formula
\[R^\rho_{\sigma \mu \nu}  = \partial_\mu \Gamma^\rho_{\nu \sigma} - \partial_\nu \Gamma^\rho_{\mu\sigma} + \Gamma^\rho_{\mu\lambda}\Gamma^{\lambda}_{\nu\sigma}-\Gamma^\rho_{\nu\lambda}\Gamma^{\lambda}_{\mu\sigma}.\]

These are:
\[R^\theta_{\theta \theta \theta}=R^\theta_{\phi \theta \theta}=R^\theta_{t \theta \theta}=R^\theta_{\alpha \theta \theta}=R^\theta_{\theta \theta \phi}=0\]

\[R^\theta_{\phi \theta \phi}=\frac{R^2 \cos (\theta)}{\left(R^2 r_{t}^2+r_{\theta}^2+4 R^2 {r}\right)^2}\]
\[\times\left(R^2 \cos (\theta) r_{t}^4+4 {r} \left(2 R^2 \cos (\theta) r_{t}^2+r_{\theta} \left(\cos (\theta) r_{\theta}-\sin (\theta) r_{\theta\theta}\right)\right)\right.\]
\[+\left.\sin (\theta) r_{\theta}^2 r_{\theta t} r_{t}+2 \sin (\theta) r_{\theta}^3+r_{\theta} r_{t}^2 \left(\cos (\theta) r_{\theta}-\sin (\theta) r_{\theta\theta}\right)+16 R^2 \cos (\theta) {r}^2\right)\]

\[R^\theta_{t \theta \phi}=R^\theta_{\alpha \theta \phi}=0\]

\[R^\theta_{\theta \theta t}=-\frac{R^2 r_{t} r_{\theta} \left(r_{\theta\theta} r_{t}^2-2 r_{\theta} r_{\theta t} r_{t}+2 {r} r_{\theta t}^2+r_{t t} \left(r_{\theta}^2-2 {r} r_{\theta\theta}\right)\right)}{2 {r} \left(R^2 r_{t}^2+r_{\theta}^2+4 R^2 {r}\right)^2}\]

\[R^\theta_{\phi \theta t}=0\]

\[R^\theta_{t \theta t}=-\frac{R^2 \left(r_{t}^2+4 {r}\right) \left(r_{\theta\theta} r_{t}^2-2 r_{\theta} r_{\theta t} r_{t}+2 {r} r_{\theta t}^2+r_{t t} \left(r_{\theta}^2-2 {r} r_{\theta\theta}\right)\right)}{2 {r} \left(R^2 r_{t}^2+r_{\theta}^2+4 R^2 {r}\right)^2}\]

\[R^\theta_{\alpha \theta t}=[R^\theta_{\theta \theta \alpha}=R^\theta_{\phi \theta \alpha}= R^\theta_{t \theta \alpha}=0\]

\[R^\theta_{\alpha \theta \alpha}=\frac{2 R^2 {r} \left(2 r_{\theta}^2+r_{t} r_{\theta t} r_{\theta}-\left(r_{t}^2+4 {r}\right) r_{\theta\theta}\right)}{\left(R^2 r_{t}^2+r_{\theta}^2+4 R^2 {r}\right)^2}\]

\[R^\phi_{\theta \phi \theta}=\frac{\tan (\theta) \left(r_{\theta}^3-2 {r} r_{\theta} r_{\theta\theta}\right)}{2 {r} \left(R^2 r_{t}^2+r_{\theta}^2+4 R^2 {r}\right)}-\tan ^2(\theta)+\sec ^2(\theta)\]

\[R^\phi_{\phi \phi \theta}=0\]

\[R^\phi_{t \phi \theta}=\frac{\tan (\theta) r_{\theta} \left(r_{t} r_{\theta}-2 {r} r_{\theta t}\right)}{2 {r} \left(R^2 r_{t}^2+r_{\theta}^2+4 R^2 {r}\right)}\]

\[R^\phi_{\alpha \phi \theta}=R^\phi_{\theta \phi \phi}=R^\phi_{\phi \phi \phi}=R^\phi_{t \phi \phi}=R^\phi_{\alpha \phi \phi}=0\]

\[R^\phi_{\theta \phi t}=\frac{\tan (\theta) r_{\theta} \left(r_{t} r_{\theta}-2 {r} r_{\theta t}\right)}{2 {r} \left(R^2 r_{t}^2+r_{\theta}^2+4 R^2 {r}\right)}\]

\[R^\phi_{\phi \phi t}=0\]

\[R^\phi_{t \phi t}=\frac{\tan (\theta) \left(r_{t}^2-2 {r} r_{t t}\right) r_{\theta}}{2 {r} \left(R^2 r_{t}^2+r_{\theta}^2+4 R^2 {r}\right)}\]

\[R^\phi_{\alpha \phi t}=R^\phi_{\theta \phi \alpha}=R^\phi_{\phi \phi \alpha}=R^\phi_{t \phi \alpha}=0\]

\[R^\phi_{\alpha \phi \alpha}=\frac{2 \tan (\theta) {r} r_{\theta}}{R^2 r_{t}^2+r_{\theta}^2+4 R^2 {r}}\]

\[R^t_{\theta t \theta}=-\frac{R^2 \left(r_{\theta\theta} r_{t}^2-2 r_{\theta} r_{\theta t} r_{t}+2 {r} r_{\theta t}^2+r_{t t} \left(r_{\theta}^2-2 {r} r_{\theta\theta}\right)\right) \left(r_{\theta}^2+4 R^2 {r}\right)}{2 {r} \left(R^2 r_{t}^2+r_{\theta}^2+4 R^2 {r}\right)^2}\]

\[R^t_{\phi t \theta}=0\]

\[R^t_{t t \theta}=-\frac{R^2 r_{t} r_{\theta} \left(r_{\theta\theta} r_{t}^2-2 r_{\theta} r_{\theta t} r_{t}+2 {r} r_{\theta t}^2+r_{t t} \left(r_{\theta}^2-2 {r} r_{\theta\theta}\right)\right)}{2 {r} \left(R^2 r_{t}^2+r_{\theta}^2+4 R^2 {r}\right)^2}\]

\[R^t_{\alpha t \theta}=R^t_{\theta t \phi}=0\]

\[R^t_{\phi t \phi}=\frac{R^2 \sin (2 \theta) r_{\theta} \left(2 R^2 r_{t}^2-r_{t t} \left(r_{\theta}^2+4 R^2 {r}\right)+r_{\theta} r_{\theta t} r_{t}\right)}{2 \left(R^2 r_{t}^2+r_{\theta}^2+4 R^2 {r}\right)^2}\]

\[R^t_{t t \phi}=R^t_{\alpha t \phi}=R^t_{\theta t t}=R^t_{\phi t t}=R^t_{t t t}=R^t_{\alpha t t}=R^t_{\theta t \alpha}=R^t_{\phi t \alpha}=R^t_{t t \alpha}=0\]

\[R^t_{\alpha t \alpha}=\frac{2 R^2 {r} \left(2 R^2 r_{t}^2-r_{t t} \left(r_{\theta}^2+4 R^2 {r}\right)+r_{\theta} r_{\theta t} r_{t}\right)}{\left(R^2 r_{t}^2+r_{\theta}^2+4 R^2 {r}\right)^2}\]

\[R^\alpha_{\theta \alpha \theta}=\frac{R^2 \left(r_{\theta}^2-2 {r} r_{\theta\theta}\right)}{{r} \left(R^2 r_{t}^2+r_{\theta}^2+4 R^2 {r}\right)}\]

\[R^\alpha_{\phi \alpha \theta}=0\]

\[R^\alpha_{t \alpha \theta}=\frac{R^2 \left(r_{t} r_{\theta}-2 {r} r_{\theta t}\right)}{{r} \left(R^2 r_{t}^2+r_{\theta}^2+4 R^2 {r}\right)}\]

\[R^\alpha_{\alpha \alpha \theta}=R^\alpha_{\theta \alpha \phi}=0\]

\[R^\alpha_{\phi \alpha \phi}=\frac{R^2 \sin (2 \theta) r_{\theta}}{R^2 r_{t}^2+r_{\theta}^2+4 R^2 {r}}\]

\[R^\alpha_{t \alpha \phi}=R^\alpha_{\alpha \alpha \phi}=0\]

\[R^\alpha_{\theta \alpha t}=\frac{R^2 \left(r_{t} r_{\theta}-2 {r} r_{\theta t}\right)}{{r} \left(R^2 r_{t}^2+r_{\theta}^2+4 R^2 {r}\right)}\]

\[R^\alpha_{\phi \alpha t}=0\]

\[R^\alpha_{t \alpha t}=\frac{R^2 \left(r_{t}^2-2 {r} r_{t t}\right)}{{r} \left(R^2 r_{t}^2+r_{\theta}^2+4 R^2 {r}\right)}\]

\[R^\alpha_{\alpha \alpha t}=R^\alpha_{\theta \alpha \alpha}=R^\alpha_{\phi \alpha \alpha}=R^\alpha_{t \alpha \alpha}=R^\alpha_{\alpha \alpha \alpha}=0.\]

The Ricci curvature, with formula $R_{ij} = \sum_{k} R^{k}_{ikj}$, is given by

\[\text{Ric}_{\theta \theta}=\frac{R^2 \left(r_{\theta}^2-2 {r} r_{\theta \theta}\right)}{{r} \left(R^2 r_{t}^2+r_{\theta}^2+4 R^2 {r}\right)}-\frac{R^2 \left(r_{\theta \theta} r_{t}^2-2 r_{\theta} r_{\theta t} r_{t}+2 {r} r_{\theta t}^2+r_{t t} \left(r_{\theta}^2-2 {r} r_{\theta \theta}\right)\right) \left(r_{\theta}^2+4 R^2 {r}\right)}{2 {r} \left(R^2 r_{t}^2+r_{\theta}^2+4 R^2 {r}\right)^2}\]
\[+\frac{\tan (\theta) \left(r_{\theta}^3-2 {r} r_{\theta} r_{\theta \theta}\right)}{2 {r} \left(R^2 r_{t}^2+r_{\theta}^2+4 R^2 {r}\right)}-\tan ^2(\theta)+\sec ^2(\theta)\]

\[\text{Ric}_{\phi \theta}=0\]

\[\text{Ric}_{t \theta}=\frac{1}{2 {r} \left(R^2 r_{t}^2+r_{\theta}^2+4 R^2 {r}\right)^2} \times\]
\[ 
\Big(2 R^2 \left(r_{t} r_{\theta}-2 {r} r_{\theta t}\right) \left(R^2 r_{t}^2+r_{\theta}^2+4 R^2 {r}\right)-R^2 r_{t} r_{\theta} \left(r_{\theta \theta} r_{t}^2-2 r_{\theta} r_{\theta t} r_{t}+2 {r} r_{\theta t}^2+r_{t t} \left(r_{\theta}^2-2 {r} r_{\theta \theta}\right)\right)\]
\[+\tan (\theta) r_{\theta} \left(r_{t} r_{\theta}-2 {r} r_{\theta t}\right) \left(R^2 r_{t}^2+r_{\theta}^2+4 R^2 {r}\right) \Big)\]

\[\text{Ric}_{\alpha \theta}=0\]

\[\text{Ric}_{\theta \phi}=0\]

\[\text{Ric}_{\phi \phi}=\frac{R^2 \cos (\theta)}{\left(R^2 r_{t}^2+r_{\theta}^2+4 R^2 {r}\right)^2} \times\]
\[\Big(R^2 \cos (\theta) r_{t}^4+r_{\theta} r_{t}^2 \left(-\sin (\theta) r_{\theta \theta}+\cos (\theta) r_{\theta}+4 R^2 \sin (\theta)\right)\]
\[+4 {r} \left(2 R^2 \cos (\theta) r_{t}^2+r_{\theta} \left(-R^2 \sin (\theta) r_{t t}-\sin (\theta) r_{\theta \theta}+\cos (\theta) r_{\theta}+2 R^2 \sin (\theta)\right)\right)\]
\[+2 \sin (\theta) r_{\theta}^2 r_{\theta t} r_{t}-\sin (\theta) \left(r_{t t}-4\right) r_{\theta}^3+16 R^2 \cos (\theta) {r}^2\Big)\]

\[\text{Ric}_{t \phi}=0\]

\[\text{Ric}_{\alpha \phi}=0\]

\[\text{Ric}_{\theta t}=\frac{1}{{2 {r} \left(R^2 r_{t}^2+r_{\theta}^2+4 R^2 {r}\right)^2}} \times \]
\[\Big(2 R^2 \left(r_{t} r_{\theta}-2 {r} r_{\theta t}\right) \left(R^2 r_{t}^2+r_{\theta}^2+4 R^2 {r}\right)-R^2 r_{t} r_{\theta} \left(r_{\theta \theta} r_{t}^2-2 r_{\theta} r_{\theta t} r_{t}+2 {r} r_{\theta t}^2+r_{t t} \left(r_{\theta}^2-2 {r} r_{\theta \theta}\right)\right)\]
\[+\tan (\theta) r_{\theta} \left(r_{t} r_{\theta}-2 {r} r_{\theta t}\right) \left(R^2 r_{t}^2+r_{\theta}^2+4 R^2 {r}\right)\Big)\]

\[\text{Ric}_{\phi t}=0\]

\[\text{Ric}_{t t}=\frac{1}{2 {r} \left(R^2 r_{t}^2+r_{\theta}^2+4 R^2 {r}\right)^2}
\times
\]
\[\Big(2 R^2 \left(r_{t}^2-2 {r} r_{t t}\right) \left(R^2 r_{t}^2+r_{\theta}^2+4 R^2 {r}\right)-R^2 \left(r_{t}^2+4 {r}\right) \left(r_{\theta \theta} r_{t}^2-2 r_{\theta} r_{\theta t} r_{t}+2 {r} r_{\theta t}^2+r_{t t} \left(r_{\theta}^2-2 {r} r_{\theta \theta}\right)\right)
\]
\[+\tan (\theta) \left(r_{t}^2-2 {r} r_{t t}\right) r_{\theta} \left(R^2 r_{t}^2+r_{\theta}^2+4 R^2 {r}\right)\Big)\]

\[\text{Ric}_{\alpha t}=0\]

\[\text{Ric}_{\theta \alpha}=0\]

\[\text{Ric}_{\phi \alpha}=0\]

\[\text{Ric}_{t \alpha}=0\]

\[\text{Ric}_{\alpha \alpha}=\frac{2 {r}}{\left(R^2 r_{t}^2+r_{\theta}^2+4 R^2 {r}\right)^2} \times\]
\[ \Big( 2 R^2 r_{t} r_{\theta} r_{\theta t}+R^2 r_{t}^2 \left(-r_{\theta \theta}+\tan (\theta) r_{\theta}+2 R^2\right)-4 R^2 {r} \left(R^2 r_{t t}+r_{\theta \theta}-\tan (\theta) r_{\theta}\right)\]
\[+r_{\theta}^2 \left(R^2 \left(-r_{t t}\right)+\tan (\theta) r_{\theta}+2 R^2\right) \Big)\]

The scalar curvature is then $S = \sum_{ij}g^{ij}\text{Ric}_{ij}$,

\[S = \frac{2}{\left(R^2 \left(4
    r+{r_t}^2\right)+{r_\theta}^2\right)^2} \times
\]
\[
 \Big(16 r^2 R^2-{r_\theta} \tan (\theta ) \left(4 r \left(R^2 ({r_{tt}}-2)+{r_{\theta \theta}}\right)+{r_t} \left(-4 R^2 {r_t}+{r_t} {r_{\theta \theta}}-2 {r_\theta}
   {r_{\theta t}}\right)+({r_{tt}}-4) {r_\theta}^2\right)+
\]
\[4 r \left(-2 R^4 {r_{tt}}+R^2 \left(2 {r_t}^2+({r_{tt}}-2) {r_{\theta \theta}}-{r_{\theta t}}^2\right)+{r_\theta}^2\right)+4 R^4
    {r_t}^2+
 \]
\[R^2 \left({r_t}^4-4 {r_t}^2 {r_{\theta \theta}}+8 {r_t} {r_\theta} {r_{\theta t}}-4 ({r_{tt}}-1) {r_\theta}^2\right)+{r_t}^2 {r_\theta}^2 \Big).\]

We want to find a function $r(\theta,t)$ satisfying the boundary conditions, for which this is positive.


\begin{figure}[ht!]
\centering
\includegraphics[width=150mm]{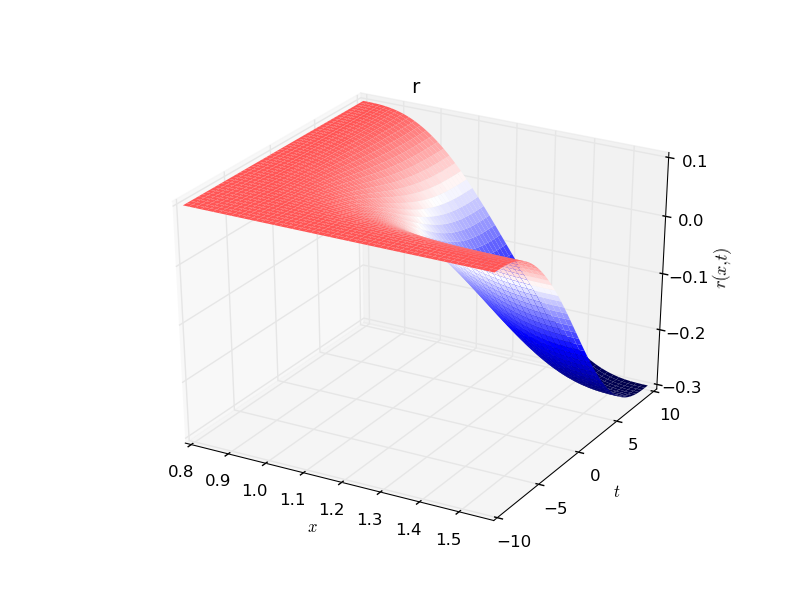}
\caption[A function used to construct the metrics of positive scalar curvature]{The function $r$, in terms of $\theta$ and $t$, where $T=10$ and $\theta_0=0.8$.}
\label{r_quintic}
\end{figure}

In the numerator of the expression above for scalar curvature, 
\[
 \Big(16 r^2 R^2-{r_\theta} \tan (\theta ) \left(4 r \left(R^2 ({r_{tt}}-2)+{r_{\theta \theta}}\right)+{r_t} \left(-4 R^2 {r_t}+{r_t} {r_{\theta \theta}}-2 {r_\theta}
   {r_{\theta t}}\right)+({r_{tt}}-4) {r_\theta}^2\right)+
\]
\[4 r \left(-2 R^4 {r_{tt}}+R^2 \left(2 {r_t}^2+({r_{tt}}-2) {r_{\theta \theta}}-{r_{\theta t}}^2\right)+{r_\theta}^2\right)+4 R^4
    {r_t}^2+
 \]
\[R^2 \left({r_t}^4-4 {r_t}^2 {r_{\theta \theta}}+8 {r_t} {r_\theta} {r_{\theta t}}-4 ({r_{tt}}-1) {r_\theta}^2\right)+{r_t}^2 {r_\theta}^2 \Big)\]
Let us consider scaling $R$ to $\alpha R$ and T to $\alpha T$. 

Then, the leading order term in $\alpha$ is 
\[N(\theta, t)=16 r^2R^2 +4 R^4 r_t^2+4R^2r_\theta^2-8R^4 r r_{tt} -8 R^2 r r_{\theta\theta}+8 r R^2 r_\theta \tan(\theta).\]

Consider the following claim:

\begin{claim}\label{leading_in_alpha_nonneg_part} 
For $R = 1$ and $T = 1$, where is a smooth function $r(\theta, t)$ satisfying the boundary conditions \ref{r_boundary_conditions} so that the quantity
\[-8R^4 r r_{tt} -8 R^2 r r_{\theta\theta}+8 r R^2 r_\theta \tan(\theta)\]
is non-negative for all $(\theta, t)$ in $[\theta_0, \pi/2] \times [-T, T]$.
\end{claim}

\begin{proof}[Proof that Claim \ref{leading_in_alpha_nonneg_part} implies the lemma]
First let us show that this claim suffices to show the lemma. Note that $N(\theta, t)$ is constant in $t$ for $t>T$ and for $t < -T$, and is constant in $\theta$ for $\theta \leq \theta_0$. Thus, its infimum is achieved for some $(\theta, t) \in [\theta_0, \pi/2] \times [-T, T]$. However, in this region, since $-8R^4 r r_{tt} -8 R^2 r r_{\theta\theta}+8 r R^2 r_\theta \tan(\theta) \geq 0$, we have that
\[N(\theta,t)  = 16 r^2R^2 +4 R^4 r_t^2+4R^2r_\theta^2-8R^4 r r_{tt} -8 R^2 r r_{\theta\theta}+8 r R^2 r_\theta \tan(\theta) \geq 6 r^2R^2 +4 R^4 r_t^2+4R^2r_\theta^2.\]
Since $r, r_t, r_\theta$ are not simultaneously $0$, for every $\theta, t$ in $[\theta_0, \pi/2] \times [-T, T]$, $N(\theta, t)$ is positive, so since its infinum is achieved for some $(\theta, t)$ in this region, the infimum of $N(\theta, t)$ is also positive. Say it is $>\delta$ for some $\delta>0$.

Then, scaling $R = \alpha$ and $T = \alpha$, we have that in the numerator of the scalar curvature, the leading order term in $\alpha$ is strictly positive, so for sufficiently large $\alpha$, $S$ is also strictly positive, as desired.

\end{proof}
The remainder of this proof is the proof of the claim. 

Let $\varepsilon_1$ be a real number such that $0 < \varepsilon_1 < \frac{1}{50}\(\frac{\pi}{2}-\theta_0\)$. 

Let $r_0$ be a positive number with $r_0<\frac{1}{2}$. Let us consider $r(\theta,t) = r_0-P(t)Q(\theta)$, where $P$ and $Q$ are smoothed step functions on $[-T,T]$ and $[\theta_0+\varepsilon_1,\pi/2-\varepsilon_1]$ respectively, that is they are a scale/translate of a step function $s(x)$ on $[0,1]$. 

We will also assume that $P$ and $P'$ are positive on $(-T,T)$ and $Q$ and $Q'$ are positive on $(\theta_0 +\varepsilon_1, \pi/2-\varepsilon_1)$, 
and $P''$ and $Q''$ are positive on $(-T,0)$ and $(\theta_0+\varepsilon_1,\frac{\theta_0+\pi/2}{2})$ respectively and negative on $(0,T)$ and $(\frac{\theta_0+\pi/2}{2}, \pi/2-\varepsilon_1)$, respectively. Moreover, we require that $P'$ and $Q'$ are symmetric about the middle of their supports.

That is, $P'$ and $Q'$ have to look like scales (and translates) of Figure \ref{Qx} and $P''$ and $Q''$ should look like scales (and translates) of Figure \ref{Qxx}. 

\begin{figure}[ht!]
\centering
\includegraphics[width=80mm]{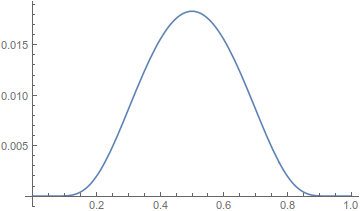}
\caption[]{$Q'$}
\label{Qx}
\end{figure}

\begin{figure}[ht!]
\centering
\includegraphics[width=80mm]{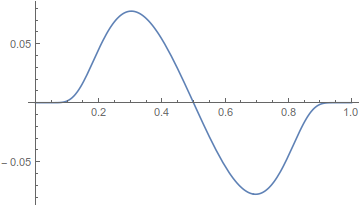}
\caption[]{$Q''$}
\label{Qxx}
\end{figure}

We will moreover require that $\lim_{x \to 0} \frac{s(x)}{s'(x) \cdot x^2}>0$ and  $\lim_{x \to 0} \frac{s'(x)}{s''(x) \cdot x^2}>0$ (meaning the limits exist and are positive). These conditions all hold for
\[s(x) = \text{constant} \cdot \int_0^x e^{-1/y-1/(1-y)}dy.\]

Let us show that such an $r(\theta, t) = r_0-P(t)Q(\theta)$ satisfies the boundary conditions \ref{r_boundary_conditions}:

\begin{enumerate}
\item Since $P$ is constant in $t$ for $t<-T$ and for $t>T$, $r$ is as well.
\item When $t\leq -T$ or $\theta \leq \theta_0$, we have $P(t)Q(\theta)=0$, so $r(\theta, t) = r_0$.
\item Since $Q$ is a step function on $[\theta_0+\varepsilon_1, \pi/2-\varepsilon_1]$, we have that it is constant in $\theta$ for $\theta <\theta_0+\varepsilon_1$, so $r$ is as well.
\item For $t>T$, $r(\pi/2, t) = r_0-P(t)Q(\pi/2) = r_0-1<0$. 
\item Since $P$ and $Q$ are non-decreasing $t$ and $\theta$ respectively, we have that $r$ is non-increasing in $\theta$ and $t$.
\item Note that $r_\theta = -Q'(\theta)$ only vanishes where $Q$ is constant, ie where $Q$ takes values $0$ or $1$, and $r_t = -P'(t)$ only vanishes where $P$ is constant, ie, where $P$ takes values $0$ or $1$. Thus, for $r_\theta$ and $r_t$ both to vanish, we would have $r(\theta, t) = r_0-P(t)Q(\theta)$ must be either $r_0$ or $r_0-1$, neither of which are $0$, since $0<r_0<1/2$. 
\end{enumerate}

We wish to show that for this $r$, we have that for $(\theta, t) \in [\theta_0, \pi/2] \times [-T, T]$, we have $-8R^4 r r_{tt} -8 R^2 r r_{\theta\theta}+8 r R^2 r_\theta \tan(\theta) \geq 0$.

But note that for $\theta \leq \theta_0+\varepsilon_1$ and $\theta \geq \pi/2 - \varepsilon_1$, $r$ is constant in $\theta$, so it suffices to show that $-8R^4 r r_{tt} -8 R^2 r r_{\theta\theta}+8 r R^2 r_\theta \tan(\theta) \geq 0$ when $r \geq 0$ and $(\theta, t) \in [\theta_0+\varepsilon_1, \pi/2-\varepsilon_1] \times [-T, T]$

Note that we are allowed to decrease $r_0$ as long as it remains positive. Hence, it suffices to show that there exists $\varepsilon$ such that
\[-8r_{tt} -8 r_{\theta\theta}+8 r_\theta \tan(\theta) \geq 0\]
whenever $\theta < \theta_0+\varepsilon_1+\varepsilon$ or $t<-T+\varepsilon$, because if we had this, then we can choose $r_0$ so that $r(\theta_0+\varepsilon_1+\varepsilon,-T+\varepsilon)=0$, so that $r \geq 0$ means either $\theta < \theta_0+\varepsilon_1+\varepsilon$ or $t<-T+\varepsilon$.

\begin{figure}[ht!]
\centering
\includegraphics[width=100mm]{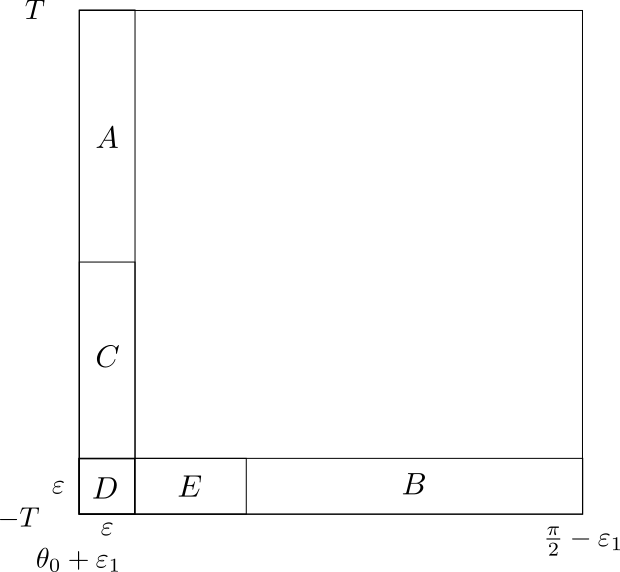}
\caption[]{}
\label{blocks}
\end{figure}

Thus, it suffices to show that there is $\varepsilon>0$ such that on each of the blocks $A,B,C,D,E$ in Figure \ref{blocks}, $-8r_{tt} -8 r_{\theta\theta}+8 r_\theta \tan(\theta) \geq 0$, where the boundary between $A$ and $C$ is $t=0$ and the boundary between $E$ and $B$ is $\theta=\frac{3\theta_0+\pi/2}{4}$. 

We can rewrite the desired inequality, $-8r_{tt} -8 r_{\theta\theta}+8 r_\theta \tan(\theta) \geq 0$ as
\[Q(\theta)P''(t)+Q''(\theta)P(t)-Q'(\theta)P(t)\tan(\theta) \geq 0.\]

In \textbf{region $A$}, note that $Q''(\theta)>0$ and $P(t) \geq 1/2$, so that $Q''(\theta)P(t)$ is positive. We will show that for sufficiently small $\varepsilon$, this term dominates the other two in region $A$, both of which are negative, since for $0 \leq t \leq T$ and all values of $\theta$, $P''(t) \leq 0$ and $Q'(\theta) \geq 0$. 

Note that for any $M>0$, for $\varepsilon$ sufficiently small $Q''(\theta) > 2MQ'(\theta)+ 2MQ(\theta)$, where both terms on the right are also positive. This follows from the conditions $\lim_{x \to 0} \frac{s(x)}{s'(x) \cdot x^2}>0$ and  $\lim_{x \to 0} \frac{s'(x)}{s''(x) \cdot x^2}>0$ for $s$. This means that $Q''(\theta)P(t)> MQ'(\theta) + MQ(\theta)$. 

Also for $\varepsilon< \frac{1}{2}(\pi/2-\theta_0-\varepsilon_1$, $ \tan(\theta) <\tan\(\frac{\pi/2+\theta_0}{2}\)$ is bounded above in region $A$, and $P(t) \geq 1/2$ and $P''(t)$ and is bounded.

Thus, choosing $M > \tan\(\frac{\pi/2+\theta_0}{2}\) + \sup(\{P''(t) | t \in [-T, T]\})$, we get 
\[Q(\theta)P''(t)+Q''(\theta)P(t)-Q'(\theta)P(t)\tan(\theta) > Q(\theta)P''(t)+MQ(\theta)+ MQ'(\theta)P(t)-Q'(\theta)P(t)\tan(\theta)\]
and by our choice of $M$, it is easy to see that the latter is non-negative.

In \textbf{region $B$}, we have that for any $M>0$, for sufficiently small $\varepsilon$, $P''(t) > 2M P(t)$. Moreover, $Q(\theta) \geq s(1/8)>0$. Also $Q''(\theta)$ and $Q'(\theta)\cdot \tan(\theta)$ are bounded. Thus, choosing $M>\frac{1}{s(1/8)}\(\sup\{Q'(\theta) \tan(\theta) | \theta \in [\theta_0, \pi/2]\} + \sup\{|Q''(\theta)|  | \theta \in [\theta_0, \pi/2]\}\)$, we get that

\[Q(\theta)P''(t)+Q''(\theta)P(t)-Q'(\theta)P(t)\tan(\theta) \geq 2Q(\theta) MP(t)+Q''(\theta)P(t)-Q'(\theta)P(t)\tan(\theta) \]
which is again non-negative by our choice of $M$.

For \textbf{regions $C$ and $D$}, both the $Q(\theta)P''(t)$ and $Q''(t)P(\theta)$ terms are non-negative. Also for any $M>0$ we can choose $\varepsilon$ sufficiently small that in these regions, $Q''(\theta)>2MQ'(\theta)$. Note that $\tan(\theta)$ is bounded above in regions $C$ and $D$, by $\tan(\theta)<\tan\(\frac{\theta_0 + \pi/2}{2}\)$. Picking $M>\tan\(\frac{\theta_0 + \pi/2}{2}\)$, we get
\[Q(\theta)P''(t)+Q''(\theta)P(t)-Q'(\theta)P(t)\tan(\theta)  \geq 2\tan(\theta)Q'(\theta)P(t)-Q'(\theta)P(t)\tan(\theta)\geq 0,\]
as desired.

In \textbf{region $E$}, note that $P''(t)$ and $Q''(\theta) \geq 0$.

Also note that there is some positive $C$ such that for $\theta<\theta_1$, $Q(\theta) Q''(\theta) \geq C (Q'(\theta))^2$. This is because $\lim_{\theta \to 0} \frac{Q(\theta) Q''(\theta)}{(Q'(\theta))^2}>0$ so $\frac{Q(\theta) Q''(\theta)}{(Q'(\theta))^2} >0$ on $[\theta_0,\theta=\frac{3\theta_0+\pi/2}{4}]$ and we can let $C$ be the infimum of $\frac{Q(\theta) Q''(\theta)}{(Q'(\theta))^2} $ in this region, which is achieved, and therefore positive.

Similarly, on $E$, for sufficiently small positive $\varepsilon$ and $C$, $\frac{P(t)P''(t)}{(P'(t))^2} \geq C$. Then
\[P''(t)Q(\theta)+Q''(\theta)P(t) \geq 2 \sqrt{P''(t)Q(\theta)Q''(\theta)P(t)} \geq 2C|P'(t)Q'(\theta)|,\]
and it suffices to show that $2C|P'(t)Q'(\theta)| -Q'(\theta)P(t)\tan(\theta) \geq 0$. But for any $M>0$, for sufficiently small $\varepsilon$, we have $P'(t) >M P(t)$, and in region $E$, $\tan(\theta)$ is bounded, so the inequality holds, as desired.

\end{proof}

Now let us use the above construction to show the following lemma; it is the same as the above, but no longer allows us to vary $R$.

\begin{lemma} 
For any $R$ and $\theta_0<\pi/2$, there is a metric on $D^2 \times D^2$ as a cobordism $D^2 \times S^1 \to S^1 \times D^2$ that agrees with $D^2_{R,\theta_0} \times S^1$ on the $-\infty$ end and is constant in time near the boundary $S^1 \times S^1 \subset D^2 \times S^1$. 
\end{lemma}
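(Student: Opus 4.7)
The plan is to deduce this lemma from the previous one by a global constant rescaling of the metric, which preserves the sign of the scalar curvature while shrinking all lengths by a fixed factor. The previous lemma produces, for each $\theta_0 \in (0, \pi/2)$, a threshold $R_0(\theta_0)$ above which the construction succeeds; the present statement insists on an arbitrary $R$, so I would first build the metric for a very large sphere of radius $\alpha R$ and then uniformly shrink by $\alpha^{-1}$.

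\textbf{Construction.} Given $R > 0$ and $\theta_0 < \pi/2$, I would choose $\alpha \geq 1$ large enough that $\alpha R \geq R_0(\theta_0)$ and invoke the previous lemma with sphere radius $R' := \alpha R$ to obtain a function $r(\theta, t)$ satisfying Condition~\ref{r_boundary_conditions} and an induced metric $g'$ on the cobordism $X' \subset \rr^6$ with $S_{g'} > 0$ pointwise. I would then set $g := \alpha^{-2} g'$ on the same underlying cobordism. Under the constant rescaling $g' \mapsto \alpha^{-2} g'$, the Christoffel symbols, Riemann tensor, and Ricci tensor are all invariant, while the inverse metric scales by $\alpha^2$, so $S_g = \alpha^2 S_{g'} > 0$. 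Lengths measured in $g$ equal $\alpha^{-1}$ times those measured in $g'$, so the ambient $S^2_{\alpha R}$ becomes $S^2_R$ and the cap $D^2_{\alpha R, \theta_0}$ becomes $D^2_{R, \theta_0}$. Thus on the $-\infty$ end the rescaled metric is the product of $D^2_{R, \theta_0}$ with an $S^1$ of some circumference (which the statement of the lemma leaves unspecified, since $D^2_{R, \theta_0} \times S^1$ does not pin down the $S^1$ length). The property that $g'$ is constant in $t$ in a neighbourhood of the remaining boundary component $S^1 \times S^1 \subset D^2 \times S^1$, guaranteed by clauses (2) and (3) of Condition~\ref{r_boundary_conditions} since $r$ is constant in $t$ and $\theta$ near $\theta = \theta_0$, is plainly preserved by a constant rescaling, so $g$ has the required behaviour at that boundary as well.

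\textbf{Main obstacle.} The analytic heart of the matter, namely choosing $r(\theta, t)$ so that the long scalar curvature expression is positive, was already done in the previous lemma, where the freedom to enlarge $R$ was crucial. Here the only work is to verify that a constant rescaling preserves the sign of scalar curvature and rescales the boundary data in the desired way, so there is essentially no new technical obstacle beyond bookkeeping.
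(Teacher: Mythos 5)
Your first step, the global rescaling $g' \mapsto \alpha^{-2} g'$ to bring the sphere radius from $\alpha R$ down to $R$ while preserving positivity of scalar curvature, is exactly the first move in the paper's proof. However, there is a gap in how you handle the $S^1$ factor. You notice that after rescaling, the circle on the $-\infty$ end has circumference $\alpha^{-1}$ times the original, and you dismiss this by claiming that the lemma "leaves the $S^1$ length unspecified." In context that dismissal doesn't hold: this lemma is the local model for a $2$-handle glued onto $S^1 \times D^2 \subset S^1 \times S^2$ where $S^1 \times S^2$ carries a fixed product metric, so the circle circumference is a prescribed quantity that your construction must match (and several handles attached to the same $S^1 \times S^2$ must all match the same circumference). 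Under your pure global rescaling that circumference is $2\pi\sqrt{r_0}/\alpha$, which is tied to $\alpha$ and $r_0$ and cannot be set independently — in particular with $r_0 < 1/2$ and $\alpha > 1$ it is forced to be small.

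The paper closes exactly this gap with a second rescaling that you omit. Observe that in the induced metric the $\alpha$-direction is orthogonal to $\theta, \phi, t$ (the off-diagonal entries $g_{\theta\alpha}, g_{\phi\alpha}, g_{t\alpha}$ all vanish), and $g_{\alpha\alpha} = r(\theta,t)$ depends only on the base coordinates. Replacing $g_{\alpha\alpha} = r$ by $c r$ for any constant $c > 0$ is therefore a warped-product reparametrization of the circle fiber, and it leaves the scalar curvature pointwise unchanged while it is still constant in $t$ near the $S^1 \times S^1$ boundary. This gives an extra free parameter $c$ with which to independently dial the $S^1$ circumference back to any desired value after the global rescaling has been used to fix $R$. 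Adding this $c$-scaling step would complete your argument.
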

\begin{proof} Consider the metric in the proof of the lemma above:
\[
g_{\mu\nu}=\left(
\begin{array}{cccc}
 R^2+\frac{1}{4 r}{r_\theta}^2 & 0 & \frac{1}{4 r} {r_t} {r_\theta}& 0 \\
 0 & R^2 \cos ^2(\theta ) & 0 & 0 \\
 \frac{1}{4 r} {r_t} {r_\theta}& 0 & \frac{1}{4 r}{r_t}^2+1 & 0 \\
 0 & 0 & 0 & r \\
\end{array}
\right)
\]
Choose some $R_0$ and $r$ such that this has positive scalar curvature; the previous lemma shows that such $r$ exists for $R_0$ sufficiently large, and the $r(\theta,t)$ constructed has form $r_0-P(t)Q(\theta)$ where $P$ and $Q$ are smoothed step functions on $[-T,T]$ and $[\theta_0,\pi/2]$.

Consider replacing the metric with
\[
g_{\mu\nu}=\left(
\begin{array}{cccc}
 R^2+\frac{1}{4 r}{r_\theta}^2 & 0 & \frac{1}{4 r} {r_t} {r_\theta}& 0 \\
 0 & R^2 \cos ^2(\theta ) & 0 & 0 \\
 \frac{1}{4 r} {r_t} {r_\theta}& 0 & \frac{1}{4 r}{r_t}^2+1 & 0 \\
 0 & 0 & 0 & cr \\
\end{array}
\right)
\]
for some constant $c$. This does not affect the scalar curvature of the metric.

For $t<-T$, the metric now looks like
\[
g_{\mu\nu}=\left(
\begin{array}{cccc}
 R^2 & 0 & 0& 0 \\
 0 & R^2 \cos ^2(\theta ) & 0 & 0 \\
 0 & 0 & 1 & 0 \\
 0 & 0 & 0 & c r_0 \\
\end{array}
\right)
\]
which is the same as the previous one, but we have scaled the $S^1$ in $D^2_R \times S^1$. For general $\theta_0$, we may now simply pick large $R$ and small $r_0$ such that there is $r(\theta,t)$ as above, so the cobordism has positive scalar curvature, then scale the entire picture (the $S^2$ part, the $S^1$ part, and time) down to make $R$ match the original value we wanted, and then scale the $r$ coordinate back so that $\sqrt{r}$ matches the desired $S^1$ radius by varying $c$. (Note that for convenience in the proof of the previous lemma, we have chosen $r$ to be the square root of the $S^1$ radius.)
\end{proof}

\subsection{when the $S^2$ has radius 1}

Repackaging the metric in the previous section in a way that makes it easier for later computations, but slightly less intuitive, we may consider on $S^1 \times S^2$, the metric
\[
g_{\mu\nu}=\left(
\begin{array}{cccc}
 1+\frac{r_\theta^2}{4 rR^2} & 0 & \frac{r_tr_\theta}{4 rR^2}& 0 \\
 0 & \cos ^2(\theta) & 0 & 0 \\
\frac{r_tr_\theta}{4 rR^2}& 0 & 1+\frac{r_t^2}{4 rR^2} & 0 \\
 0 & 0 & 0 & r \\
\end{array}
\right)
\]
where $r = 1-MP(t)Q(\theta)$.

This is the metric on $\rr^4$ inherited from the map $\rr^4 \to \rr^6$ given by
\[(\theta, \phi, t, a) \mapsto (\cos(\theta)\cos(\phi), \cos(\theta)\sin(\phi), \sin(\theta), \sqrt{r(\theta,t)}/R \cos(Ra) ,\sqrt{r(\theta,t)}/R \sin(Ra), t),\]
so it is a well defined metric.

When $t<-T$ or $\theta<\theta_0$, the metric is 
\[
\left(
\begin{array}{cccc}
 1 & 0 &0 & 0 \\
 0 & \cos ^2(\theta) & 0 & 0 \\
0 & 0 & 1 & 0 \\
 0 & 0 & 0 & 1 \\
\end{array}
\right).
\]

This has scalar curvature 
\[\frac{2}{\left(4 R^2 r+{r_{t}}^2+{r_\theta}^2\right)^2} \times\]
\[\left(16 R^4 r^2-4 R^2 r \left({r_{tt}} \left(2 R^2+{r_\theta} \tan ({\theta})-{r_{\theta\theta}}\right)-2 R^2 {r_\theta} \tan ({\theta})+2 R^2 {r_{\theta\theta}}-2
   {r_{t}}^2-{r_\theta}^2+{r_\theta} {r_{\theta\theta}} \tan ({\theta})+{r_{\theta t}}^2\right) \right.\]
\[\left.+{r_{t}}^2 \left({r_\theta} \left(4 R^2-{r_{\theta\theta}}\right) \tan ({\theta})+4 R^2
   \left(R^2-{r_{\theta\theta}}\right)+{r_\theta}^2\right)+2 {r_{t}} {r_\theta} {r_{\theta t}} \left(4 R^2+{r_\theta} \tan ({\theta})\right)\right.\]
\[\left.+{r_\theta}^2 \left(4 \left(R^4+R^2 {r_\theta} \tan
   ({\theta})\right)-{r_{tt}} \left(4 R^2+{r_\theta} \tan ({\theta})\right)\right)+{r_{t}}^4\)\]
the leading order term of the numerator in $R$ is
\[16 {r}^2-8 {r} {r_{tt}}+8 {r} {r_{\theta}} \tan (\theta)-8 {r} {r_{\theta\theta}}+4 {r_{t}}^2+4 {r_{\theta}}^2\]
And, as in the previous section, let $r(\theta,t) = 1-MP(t)Q(\theta)$, where $P$ is the smoothed step function on $[-1,1]$, and $Q$ is the smoothed step function on $[\theta_0, \pi/2]$. When $M$ is sufficiently large, we can make the leading term in $R$ positive, so that when $R$ is sufficiently large, the scalar curvature is positive whenever $r \geq 0$.

This gives us a positive scalar curvature on $S^2 \times S^1$ with a handle attached such that for $t<-1$, the metric is \[
g_{\mu\nu}=\left(
\begin{array}{cccc}
 1 & 0 & 0& 0 \\
 0 & \cos ^2(\theta) & 0 & 0 \\
0& 0 & 1 & 0 \\
 0 & 0 & 0 & 1 \\
\end{array}
\right).
\]
By taking $\theta_0 \to \pi/2$, we can push the handle attachment arbitrarily close to the pole.

Note that the logic here is this: First find $r$ of the form $1-P(t)Q(\theta)M$ such that $-r_{tt}+r_\theta \tan(\theta) -r_{\theta \theta} \geq 0$, where it is equal to zero only on the boundary, where $\theta = \theta_0$ or $t=-1$. Then, for sufficiently large $R$, the scalar curvature expression above is positive. Then this is the scalar curvature for a metric in coordinates $(\theta, \phi, t, \alpha/R)$, where the $\alpha$ has been divided by $R$ because we scaled the last coordinate.

\bibliography{main}
\bibliographystyle{plain}

\end{document}